\newtheorem{theorem}{Theorem}[section]
\newtheorem{corollary}[theorem]{Corollary}
\newtheorem{Definition}[theorem]{Definition}
\newtheorem{lemma}[theorem]{Lemma}
\newtheorem{proposition}[theorem]{Proposition}
\newtheorem{Example}[theorem]{Example}
\newtheorem{Remark}[theorem]{Remark}
\newenvironment{remark}{\begin{Remark}\begin{em}}{\end{em}\end{Remark}}
\newenvironment{example}{\begin{Example}\begin{em}}{\end{em}\end{Example}}
\newenvironment{definition}{\begin{Definition}\begin{em}}{\end{em}\end{Definition}}
\newcommand{\A}{\mathcal{A}}
\newcommand{\R}{{\mathbb R}}
\newcommand{\bP}{{\mathbb P}}
\newcommand{\RP}{{\mathbb R}^+}
\newcommand{\N}{\mathbb N}
\newcommand{\ve}{\varepsilon}
\DeclareMathOperator{\ua}{\uparrow\!}
\DeclareMathOperator{\da}{\downarrow\!}
\DeclareMathOperator{\Pro}{\mathcal{P}}
\DeclareMathOperator{\argmin}{\mathrm{arg\, min}}
\begin{document}
\title[The Stochastic order of probability measures]
{The Stochastic order of probability measures on ordered metric spaces}

\author{Fumio Hiai, Jimmie Lawson, Yongdo Lim}

\address{Tohoku University (Emeritus), Hakusan 3-8-16-303, Abiko 270-1154, Japan}\email{hiai.fumio@gmail.com}
\address{Department of Mathematics, Louisiana State University,
Baton Rouge, LA70803, USA}\email{lawson@math.lsu.edu}
\address{Department of Mathematics, Sungkyunkwan University, Suwon 440-746, Korea} \email{ylim@skku.edu}
\maketitle

\begin{abstract}
The general notion of a stochastic ordering is that one probability distribution is smaller than a second one if the second attaches more
probability to higher values than the first.  Motivated by recent work on barycentric maps on spaces of probability measures on ordered
Banach spaces,  we introduce and study a stochastic order on the space of probability measures $\mathcal{P}(X)$, where
$X$ is a metric space equipped with a closed partial order, and derive several useful equivalent versions of the definition. We establish the antisymmetry
and closedness of the stochastic order (and hence that it is a closed partial  order) for the case of a partial order on a Banach space induced
by a closed normal cone with interior.  We also consider order-completeness of the stochastic order for a cone of a finite-dimensional Banach space and  derive a version of the
arithmetic-geometric-harmonic mean inequalities in the setting of the associated probability space on positive matrices.
\end{abstract}

\noindent \textit{2010 Mathematics Subject Classification}. Primary 60B11, 28A33 Secondary 47B65, 28B15, 54E70

\noindent \textit{Key words and phrases.} stochastic order, Borel probability measure, ordered metric spaces, normal cones,
Wasserstein metric, AGH mean inequalities.

\allowdisplaybreaks

\section{Introduction}   The stochastic order for random variables $X,Y$ from a
probability measure space $(M,P)$ to $\R$ is defined by $X\leq Y$ if $P(X>t)\leq P(Y>t)$ for all $t\in \R$.  This notion
extends directly to random variables into $\R^n$ equipped with the coordinatewise order.  Alternatively one can define
a stochastic order on the Borel probability measures on $\R$ or $\R^n$ by $\mu\leq \nu$ if for each $s\in\R$,
$\mu(s<t)\leq \nu(s<t)$, where $(s<t):=\{t\in\R: s<t\}$.  One then has for random variables $X,Y$, $X\leq Y$ in the
stochastic order if and only if $P_X\leq P_Y$, where $P_X,P_Y$ are the push-forward probability measures with respect to
$X,Y$ respectively.

There are important metric spaces which are equipped with a naturally defined partial order, for example the open cone $\mathbb{P}_n$ of positive definite
matrices of some fixed dimension, where the order is the Loewner order.   One can  use the Loewner order to define an order on
$\mathcal{P}(\mathbb{P}_n)$, the space of Borel probability measures, an order that we call that stochastic order, as it generalizes the case of $\mathbb{R}$
or $\R^n$.

In this paper we broadly generalize the stochastic order  to an order on the set of Borel probability measures on a partially ordered metric space.
We develop basic properties of this order and specialize to the setting of normal cones in Banach spaces to show that the stochastic order
in that setting  is indeed a partial order.

In Section 3 we give the general definition of the stochastic order on $\mathcal{P}(X)$ for a partially ordered metric space $X$ and derive several useful
alternative formulations.  In Section 4 we show for normal cones with interior that the stochastic order on $\mathcal{P}(X)$ is indeed a partial order (the
antisymmetry being the nontrivial property to establish).  In Section 5 we show in the normal cone setting that the stochastic partial order is a
closed order with respect to the weak topology, and hence with respect to the Wasserstein topology.  In Section 6 we consider the order-completeness
of $\mathcal{P}(X)$, and in Section 7 derive a version of the arithmetic-geometric-harmonic means inequality in the setting of the probability space
$\mathcal{P}(\mathbb{P})$ on the cone $\bP$ of positive invertible operators on a
Hilbert space.

 In what follows $\RP=[0,\infty)$.

\section{Borel measures}
In this section we recall some basic results about Borel measures on metric spaces that will be needed in what follows.  As usual the Borel
algebra on a metric space $(X,d)$ is the smallest $\sigma$-algebra containing the open sets
and a finite positive Borel measure is a countably additive measure $\mu$ defined on the Borel sets such that $\mu(X)<\infty$.  We work exclusively with finite positive
Borel measures, primarily those that are probability measures.

Recall that a Borel measure $\mu$ is $\tau$-\emph{additive} if
$\tau(U)=\sup_\alpha \tau(U_\alpha)$ for any \emph{directed} union
$U=\bigcup_\alpha U_\alpha$ of open sets.  The measure $\mu$ is said
to be \emph{inner regular} or \emph{tight} if for any Borel set $A$
and $\ve>0$ there exists a compact set $K\subseteq A$ such that
 $\mu(A)-\ve<\mu(K)$. A
tight finite Borel measure is also called a Radon measure.

Probability on metric spaces has been carried out primarily for separable metric spaces, although results exist  for the non-separable setting.
We recall the following result, which can be more-or-less cobbled together from results in the literature; see \cite{La17} for more details.
\begin{proposition}\label{P:La}
A finite Borel measure $\mu$ on a metric space $(X,d)$ has separable
support. The following three conditions are equivalent$:$
\begin{itemize}
\item[(1)] The support of $\mu$ has measure $\mu(X)$.
\item[(2)] The measure $\mu$ is $\tau$-additive.
\item[(3)] The measure $\mu$ is the weak limit of a sequence of finitely supported measures.
\end{itemize}
If in addition $X$ is complete, these are also equivalent to:
\begin{itemize}
\item[(4)] The measure $\mu$ is inner regular.
\end{itemize}
\end{proposition}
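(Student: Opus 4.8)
The plan is to isolate a single ``engine'' observation --- that a measure concentrated on a separable Borel set is automatically $\tau$-additive --- and to run all the equivalences off it, after first checking separability of the support. For the latter, for each $n$ use Zorn's lemma to choose a maximal $(1/n)$-separated subset $S_n$ of the support of $\mu$. The open balls $B(x,1/2n)$, $x\in S_n$, are pairwise disjoint, and each has strictly positive measure because its centre lies in the support; since $\mu(X)<\infty$, for every $j$ only finitely many of these balls can have measure $\ge 1/j$, so each $S_n$ is countable. Maximality forces $S_n$ to be a $(1/n)$-net for the support, so $\bigcup_n S_n$ is a countable dense subset of it.

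Next I would prove the engine: \emph{if $\mu(D)=\mu(X)$ for some separable Borel set $D$, then $\mu$ is $\tau$-additive.} One may take $D$ closed (pass to its closure), so that $D$ is Lindel\"of; given a directed union of open sets $U=\bigcup_\alpha U_\alpha$, extract a countable subcover of $U\cap D$ from $\{U_\alpha\cap D\}$ and, using directedness, replace it by an increasing sequence $(V_n)$ drawn from the family with $U\cap D\subseteq\bigcup_n V_n\subseteq U$, whence $\mu(U)=\mu(U\cap D)\le\lim_n\mu(V_n)\le\sup_\alpha\mu(U_\alpha)\le\mu(U)$. Conversely, $(2)\Rightarrow(1)$ is immediate: the open null sets form a directed family whose union is the complement of the support, so $\tau$-additivity forces that complement to be null. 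Thus each of $(1)$ and $(2)$ is equivalent to the statement that $\mu$ is concentrated on a separable Borel set.

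It then remains to fold $(3)$ into this picture. For $(3)\Rightarrow(1)$: if $\mu=\lim_n\mu_n$ weakly with each $\mu_n$ finitely supported, then $D:=\overline{\bigcup_n\operatorname{supp}\mu_n}$ is separable, and the portmanteau inequality for open sets gives $\mu(X\setminus D)\le\liminf_n\mu_n(X\setminus D)=0$, so the engine applies. For $(1)\Rightarrow(3)$: partition the (separable) support into countably many Borel pieces of diameter $\le 2/n$ using a countable dense set, collapse each piece to a point mass at one of its points, and discard all but finitely many pieces --- permissible since $\mu$ is finite --- moving the small residual mass onto one fixed atom; for any bounded uniformly continuous $f$ a modulus-of-continuity estimate bounds the resulting error by $\mu(X)\,\omega_f(2/n)+2\|f\|_\infty/n\to 0$, which is precisely weak convergence. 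When $X$ is complete, the support of $\mu$ is a closed, hence complete, separable space --- a Polish space --- and $(4)\Rightarrow(1)$ is easy: a sequence of compacta $K_n$ with $\mu(X\setminus K_n)<1/n$ has $\sigma$-compact, hence separable, union of full measure, so the engine applies once more.

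The one genuinely substantial step is $(1)\Rightarrow(4)$, which is the classical tightness theorem for finite Borel measures on Polish spaces: for each $n$ choose finitely many closed $(1/n)$-balls covering the support up to measure $\ve/2^n$; their intersection over $n$ is closed and totally bounded, hence compact in $X$, and omits a set of measure $\le\ve$, and combining this with the inner regularity by closed sets that every finite Borel measure on a metric space enjoys upgrades it to inner regularity for arbitrary Borel sets. I expect this tightness theorem to be the main obstacle; the only other point needing care is the systematic reduction to the separable subspace $\operatorname{supp}\mu$ inside the possibly non-separable $X$, in particular verifying that the Lindel\"of arguments and the portmanteau characterisation of weak convergence survive this restriction intact.
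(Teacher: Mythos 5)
Your argument is correct, but it is organized quite differently from the paper's. The paper does not actually prove the separability of the support or the equivalence of (1)--(3): it cites \cite{La17} for those and only writes out the two implications involving (4). For (1)$\Rightarrow$(4) you and the paper do essentially the same thing --- reduce to the support, which is a closed separable subspace of the complete space $X$ and hence Polish, and invoke the classical Ulam tightness theorem there (the paper treats it as a ``standard result,'' you sketch its proof via finite unions of small closed balls intersected over all scales). For (4)$\Rightarrow$(1) the routes diverge: the paper argues by contradiction, producing a compact set of positive measure inside $X\setminus\mathrm{supp}\,\mu$ and covering it by finitely many $\mu$-null open sets, whereas you extract a $\sigma$-compact (hence separable) full-measure set and feed it into your ``engine'' lemma that concentration on a separable Borel set implies $\tau$-additivity. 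That engine is the real structural difference: by making ``$\mu$ is concentrated on a separable Borel set'' the hub, you obtain a self-contained proof of (1)$\Leftrightarrow$(2)$\Leftrightarrow$(3) (maximal $\ve$-separated nets for separability of the support, Lindel\"of extraction plus directedness for $\tau$-additivity, portmanteau and a diameter-$2/n$ partition for the finitely supported approximation), at the cost of having to check that the Lindel\"of and portmanteau arguments survive in a possibly non-separable ambient space --- which you correctly flag and which does go through. The paper's version buys brevity by outsourcing exactly that work to \cite{La17}; yours buys self-containment and a single reusable lemma from which all of (1), (2), (3) and the easy half of (4) fall out.
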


\begin{proof} For a proof of separability and the equivalence of the first three conditions, see \cite{La17}.  Suppose (1)--(3) hold and
$X$ is complete.  Let $\mu$ be a finite Borel measure.
Then the support $S$ of $\mu$ is closed, separable and has measure
$1$. Let $A$ be any Borel measurable set.  Then $\mu(A\cap
(X\setminus S))=0$ since $\mu(X\setminus S)=0$, so $\mu(A)=\mu(A\cap
S)$.  Since the metric space $S$ is a separable complete metric
space, it is a standard result that $\mu\vert_S$ is an inner regular
measure.  Thus for $\ve>0$ there exists a compact set $K\subseteq
S\cap A\subseteq A$ such that $\mu(A)=\mu(A\cap S)<\mu(K)+\ve$.

Conversely suppose $\mu$ is inner regular.  If $\mu(S)<\mu(X)$ for the support $S$ of $\mu$, then for $U=X\setminus S$, $\mu(U)>0$.
By inner regularity there exists a compact set $K\subseteq U$ such that $\mu(K)>0$.  Since $K$ misses the support of $\mu$, for each
$x\in K$, there exists an open set $U_x$ containing $x$ such that $\mu(U_x)=0$.  Finitely many of the $\{U_x\}$ cover $K$, the finite union has
measure $0$, so the subset $K$ has measure $0$, a contradiction.  So the support of $\mu$ has measure $\mu(X)$.
\end{proof}

\begin{remark} Finite Borel measures on separable metric spaces are easily shown to be $\tau$-additive and hence satisfy the other equivalent conditions of Proposition \ref{P:La}.
Finite Borel measures that fail to satisfy the previous conclusions are rare. Indeed it is a theorem that in a complete metric space $X$ there exists
a finite Borel measure that  fails to be inner regular if and only if the minimal cardinality $w(X)$ for a basis of open sets of $X$ is a measurable
cardinal; see volume 4, page 244 of \cite{Fr}. The existence of measurable cardinals is an axiom independent of the basic Zermelo-Fraenkel axioms of set theory
and thus if its negation is assumed, all finite Borel measures on complete metric spaces satisfy the four conditions of Proposition \ref{P:La}.
\end{remark}

\section{The stochastic order}
\emph{We henceforth restrict our attention to the set of Borel
probability measures on a metric space $X$ satisfying the four
conditions of Proposition \ref{P:La} and denote this set} $\Pro(X)$.
For complete separable metric spaces  the set $\Pro(X)$  consists of
all Borel probability measures, which are automatically $\tau$-additive in this case.

\begin{definition} A \emph{partially ordered topological space} is a space equipped with a closed partial order $\leq$, one for which $\{(x,y):x\leq y\}$ is closed
in $X\times X$.  
\end{definition}
For a nonempty subset $A$ of a partially ordered set $P$, let $\ua A:=\{y\in P:\exists x\in A,\, x\leq y\}$.  The set $\da A$ is defined in an order-dual fashion.
A set $A$ is an \emph{upper set} if $\ua A=A$ and a \emph{lower set} if $\da A=A$.  We abbreviate $\ua\{x\}$ by $\ua x$ and $\da\{x\}$ by $\da x$.

\begin{lemma}\label{L:SO1} A partially ordered topological space is Hausdorff. If $K$ is a nonempty compact subset, then $\ua K$ and $\da K$ are closed.
\end{lemma}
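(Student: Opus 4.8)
The plan is to use, in both parts, only the single hypothesis that the graph $G:=\{(x,y)\in X\times X : x\le y\}$ of the order is closed, so that $(X\times X)\setminus G$ is open and every one of its points has a basic neighborhood of product form $U\times V$ lying entirely in the complement of $G$.

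For the Hausdorff assertion, I would take distinct points $x,y\in X$. Since $\le$ is antisymmetric, we cannot have both $x\le y$ and $y\le x$; without loss of generality $x\not\le y$, i.e.\ $(x,y)\in (X\times X)\setminus G$. By openness of the complement there are open sets $U\ni x$ and $V\ni y$ with $(U\times V)\cap G=\emptyset$. If some $z$ lay in $U\cap V$, then $(z,z)\in U\times V$, contradicting $(z,z)\in G$ (reflexivity); hence $U\cap V=\emptyset$ and $X$ is Hausdorff.

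For the second assertion I would prove $\ua K$ closed by showing its complement is open, the argument for $\da K$ being order-dual. Fix $y\notin\ua K$, so that $x\not\le y$ for every $x\in K$, i.e.\ $(x,y)\in (X\times X)\setminus G$ for each $x\in K$. For each such $x$ choose basic open sets $U_x\ni x$, $V_x\ni y$ with $(U_x\times V_x)\cap G=\emptyset$. The family $\{U_x : x\in K\}$ is an open cover of the compact set $K$, so finitely many $U_{x_1},\dots,U_{x_n}$ cover $K$; set $V:=\bigcap_{i=1}^n V_{x_i}$, an open neighborhood of $y$. If $z\in V$ and $z\in\ua K$, pick $x\in K$ with $x\le z$ and an index $i$ with $x\in U_{x_i}$; then $z\in V\subseteq V_{x_i}$, so $(x,z)\in U_{x_i}\times V_{x_i}$, contradicting disjointness from $G$. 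Hence $V\cap\ua K=\emptyset$, and $\ua K$ is closed; dually so is $\da K$.

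The argument is entirely elementary. The only step needing a little care is the extraction of the finite subcover in the second part: this is the sole place compactness of $K$ (rather than mere closedness) is used, and it is exactly what turns the family of neighborhoods $V_x$ of $y$ into a single open neighborhood $V$ of $y$ missing $\ua K$. In the Hausdorff part the one thing to remember is to invoke antisymmetry so as to guarantee that at least one of $x\not\le y$ and $y\not\le x$ holds.
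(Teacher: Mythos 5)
Your proof is correct. The paper itself gives no argument for this lemma, deferring instead to Section VI-1 of the cited reference \cite{GS}; your write-up is precisely the standard argument found there (the product-neighborhood separation for Hausdorffness, using reflexivity and antisymmetry, and the finite-subcover "tube" argument for the closedness of $\ua K$ and $\da K$), so it fills in exactly what the citation elides.
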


\begin{proof} See Section VI-1 of \cite{GS}. \end{proof}

 The following definition captures in the setting of ordered topological spaces the notion that higher values should have higher probability.
\begin{definition} For a topological  space $X$ equipped with a closed partial order, the \emph{stochastic order} on $\Pro(X)$ is defined by
$\mu\leq \nu$ if $\mu(U)\leq \nu(U)$ for each open upper set $U$.
\end{definition}

\begin{proposition}\label{P:SO2}
Let $X$ be a metric space equipped with a closed partial order. Then
the following are equivalent for $\mu,\nu\in\Pro(X):$
\begin{itemize}
\item[(1)]  $\mu\leq \nu;$
\item[(2)] $\mu(A)\leq \nu(A)$ for each closed upper set $A;$
\item[(3)]  $\mu(B)\leq \nu(B)$ for each upper Borel set $B$.
\end{itemize}
\end{proposition}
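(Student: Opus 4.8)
The plan is to establish the cycle of implications $(1)\Rightarrow(2)\Rightarrow(3)\Rightarrow(1)$. The implication $(3)\Rightarrow(1)$ is free, since every open upper set is an upper Borel set; so the content lies in the two passages ``open upper $\leadsto$ closed upper'' and ``closed upper $\leadsto$ Borel upper.'' Both will rest on inner regularity of the measures in $\Pro(X)$ (condition~(4) of Proposition~\ref{P:La}) together with Lemma~\ref{L:SO1}, which guarantees that $\ua K$ and $\da K$ are closed whenever $K$ is compact.

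For $(1)\Rightarrow(2)$, the naive attempt --- squeeze a closed upper set $A$ between itself and shrinking open \emph{upper} neighborhoods --- fails, because metric $\ve$-balls around $A$ need not be upper sets, and $\ua$ of an open set need not be open. The remedy is to pass to the complement. Given a closed upper set $A$ and $\ve>0$, the set $X\setminus A$ is an open \emph{lower} set, so I would use inner regularity of $\nu$ to choose a compact $L\subseteq X\setminus A$ with $\nu(L)>\nu(X\setminus A)-\ve$. Since $X\setminus A$ is a lower set and $\da$ is monotone, $\da L\subseteq X\setminus A$; by Lemma~\ref{L:SO1} the set $\da L$ is closed, so $U:=X\setminus\da L$ is an open \emph{upper} set, and $A\subseteq U$. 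Hypothesis~(1) then gives
\[ \mu(A)\le\mu(U)\le\nu(U)=1-\nu(\da L)\le 1-\nu(L)<\nu(A)+\ve, \]
and letting $\ve\downarrow 0$ yields $\mu(A)\le\nu(A)$.

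For $(2)\Rightarrow(3)$ I would instead approximate from the inside. Given an upper Borel set $B$ and $\ve>0$, inner regularity of $\mu$ provides a compact $K\subseteq B$ with $\mu(K)>\mu(B)-\ve$. Since $B$ is an upper set and $K\subseteq B$, we have $\ua K\subseteq B$, and by Lemma~\ref{L:SO1} the set $\ua K$ is a closed upper set; hence, using hypothesis~(2),
\[ \mu(B)-\ve<\mu(K)\le\mu(\ua K)\le\nu(\ua K)\le\nu(B), \]
and $\ve\downarrow 0$ completes the proof.

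The only genuine obstacle is the step $(1)\Rightarrow(2)$: one has to recognize that ``outer approximation by open upper sets'' should be replaced by ``inner approximation of the complement by a compact set, followed by complementing its down-set.'' Once that device is in hand, the remainder is routine bookkeeping with probability measures and two applications of Lemma~\ref{L:SO1}.
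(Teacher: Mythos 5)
Your proof is correct and uses essentially the same devices as the paper's: inner regularity applied to the (lower-set) complement, Lemma~\ref{L:SO1} to close up $\da L$ and $\ua K$, and the complementation trick to produce an open upper superset. The only difference is organizational --- you run the cycle $(1)\Rightarrow(2)\Rightarrow(3)\Rightarrow(1)$, while the paper proves $(1)\Rightarrow(3)$ directly by the identical complementation argument and handles $(2)\Rightarrow(3)$ separately (and more sketchily than your written-out version) --- so the mathematical content coincides.
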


\begin{proof} Clearly (3) implies both (1) and (2).

(1)$\Rightarrow$(3):  Let $B=\ua B$ be a Borel set.  The $A=X\setminus B$ is also a Borel set.  Let $\ve>0$.  By inner regularity there exists
a compact set $K\subseteq A$ such that $\nu(K)> \nu(A)-\ve$.  By Lemma \ref{L:SO1} $\da K$ is closed, and $K\subseteq \da K\subseteq \da A=A$.  Thus $\nu(\da K)>\nu(A)-\ve$.  The complement $U$ of $\da K$ is an open upper set.
Taking complements we obtain
\begin{align*}
\mu(B)&=1-\mu(A) \leq 1-\mu(\da K)=\mu(U) \leq \nu(U) =1-\nu(\da K) \\
&<1-\nu(A)+\ve=\nu(B)+\ve.
\end{align*}
Since $\mu(B)<\nu(B)+\ve$ for all $\ve>0$, we conclude $\mu(B)\leq \nu(B)$.

(2)$\Rightarrow$ (3): We can approximate any Borel upper set $B$ arbitrarily closely from the inside with compact subsets $K$ and their upper sets $\ua K$ will
be closed sets that are at least as good approximations.  The Borel measure $\nu$ dominates $\mu$ on these  closed upper sets and hence also
in the limiting case of $B$.
\end{proof}

\begin{remark}\label{R:SO3}
By taking complements one determines that each of the preceding equivalences has an equivalent version for lower sets with the
inequalities in (2) and (3) reversed.
\end{remark}

 We turn now to functional characterizations of the stochastic order on $\Pro(X)$ for $X$ a metric space equipped with
 a closed partial order.
 In the next proposition, we write $\int_X f(x)\, d\mu(x)$ or simply $\int_X f\,d\mu$ for any Borel function $f:X\to \R^+$ and $\mu\in\Pro(X)$,
 where the integral is possibly infinite.
We say that $f$ is \emph{monotone} if $x\leq y$ in $X$ implies $f(x) \leq f(y)$.

\begin{proposition}\label{P:SO3}
Let $X$ be a metric space equipped with a closed partial order. Then
the following are equivalent for $\mu,\nu\in\Pro(X):$
\begin{itemize}
\item[(1)] $\mu\leq \nu;$
\item[(2)] for every monotone  $($bounded$)$ Borel function $f:X\to \RP$, $\int_X f\,d\mu\leq \int_X
f\,d\nu;$
\item[(3)] for every  monotone $($bounded$)$ lower semicontinuous $f:X\to \RP$, $\int_X f\,d\mu\leq \int_X f\,d\nu$.
\end{itemize}
\end{proposition}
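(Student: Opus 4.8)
The plan is to establish the cycle $(1)\Rightarrow(2)\Rightarrow(3)\Rightarrow(1)$, using the layer-cake (distribution-function) representation of integrals of nonnegative functions as the main bridge between sets and functions. The key observation is that for a monotone Borel function $f:X\to\RP$ and each $t\ge 0$, the superlevel set $\{f>t\}=\{x:f(x)>t\}$ is an upper Borel set: if $f(x)>t$ and $x\le y$ then $f(y)\ge f(x)>t$. This immediately couples Proposition~\ref{P:SO3} to the set-theoretic version in Proposition~\ref{P:SO2}(3).

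For $(1)\Rightarrow(2)$, assume $\mu\le\nu$. Given a monotone Borel $f:X\to\RP$, write
\[
\int_X f\,d\mu=\int_0^\infty \mu(\{f>t\})\,dt,
\]
and the same for $\nu$, by Tonelli/the layer-cake formula (valid with value in $[0,\infty]$). Since each $\{f>t\}$ is an upper Borel set, Proposition~\ref{P:SO2} gives $\mu(\{f>t\})\le\nu(\{f>t\})$ for every $t$, and integrating the inequality over $t\in(0,\infty)$ yields $\int_X f\,d\mu\le\int_X f\,d\nu$. The bounded case is a special case (and the parenthetical ``bounded'' versions of (2) and (3) will be handled along the way by truncation, noting $f\wedge n$ is still monotone and lower semicontinuous when $f$ is). The implication $(2)\Rightarrow(3)$ is trivial since a monotone lower semicontinuous function is in particular a monotone Borel function.

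The substantive implication is $(3)\Rightarrow(1)$, and this is where the main work lies: given an open upper set $U$, I must produce monotone lower semicontinuous functions that ``see'' $U$. The natural candidate is the indicator $\chi_U$ itself: since $U$ is open, $\chi_U$ is lower semicontinuous, and since $U$ is an upper set, $\chi_U$ is monotone (if $x\le y$ and $x\in U$ then $y\in U$, so $\chi_U(x)=1\le\chi_U(y)$; otherwise $\chi_U(x)=0\le\chi_U(y)$). It is bounded, so it qualifies for the parenthetical version as well. Applying (3) to $f=\chi_U$ gives exactly $\mu(U)=\int_X\chi_U\,d\mu\le\int_X\chi_U\,d\nu=\nu(U)$, which is the definition of $\mu\le\nu$. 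So in fact the only genuinely delicate point is making sure the layer-cake representation is applied correctly in the possibly-infinite setting, and confirming that truncation preserves both monotonicity and lower semicontinuity so that the bracketed ``bounded'' strengthenings of (2) and (3) are also equivalent; neither is a serious obstacle. I would remark that $\chi_U$ being simultaneously monotone, lower semicontinuous, and bounded is precisely why all the variants collapse to the same condition.
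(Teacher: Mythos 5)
Your proposal is correct and follows essentially the same route as the paper: both reduce the integral inequality to the set inequality of Proposition~\ref{P:SO2} via the observation that superlevel sets $\{f>t\}$ of a monotone Borel function are upper Borel sets, and both prove $(3)\Rightarrow(1)$ by applying the hypothesis to $\chi_U$ for an open upper set $U$. The only cosmetic difference is that you invoke the continuous layer-cake formula where the paper uses its dyadic discretization (approximating $f$ from below by step functions $\sum_i 2^{-n}\chi_{f^{-1}(]i/2^n,\infty))}$ and passing to the limit by monotone convergence).
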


\begin{proof} The implications that the general case implies the bounded case in items (2) and (3) are trivial.

(1)$\Rightarrow$(2): Assume $\mu\leq \nu$ and let $f$ be a non-negative monotone Borel measurable
function on $X$.   For each $n$, define $\delta_n:\RP\to\RP$ by $\delta_n(0)=0$,
$\delta_n(t)=(i-1)/2^n$ if $(i-1)/2^n<t\leq i/2^n$ for some
integer $i$, $1\leq i\leq n2^n$, and $\delta_n(t)=n$ for $n<t$.
Note that the ascending step function $\delta_n$
has finite image contained in
$\R^+$ and that the sequence $\delta_n$ monotonically increases
to the identity map on $\RP$. Hence $f_n:=\delta_nf$, the composition of $\delta_n$ and $f$,
monotonically increases to $f$. One verifies directly that the
step function  $f_n$ has an alternative description given by
\[ f_n =\sum_{i=1}^{n2^n} \frac{1}{2^n}\chi_{f^{-1}(]i/2^n,\infty))},\]
where $\chi_A$ is the characteristic function of $A$.  Since the sequence $\{f_n\}$
converges pointwise and monotonically to $f$, we conclude that $\int_X f\, d\mu=
\lim_n\int_X f_n\, d\mu$, and similarly for $\nu$.  Since  $f^{-1}(]i/2^n,\infty))$ is an upper
Borel set,  by Proposition \ref{P:SO2} $\mu(f^{-1}(]i/2^n,\infty)))\leq \nu(f^{-1}(]i/2^n,\infty)))$
for each $i$, so $\int_X f_n\, d\mu\leq\int_X f_n\,d\nu$ for each $n$, and thus in the limit
$\int_X f\,d\mu\leq \int_X f\,d\nu$.

(2)$\Rightarrow$(3): Since a lower semicontinuous function is a Borel measurable function, (3) follows immediately from (2).

(3)$\Rightarrow$(1):  The characteristic function $\chi_U$  is bounded, lower semicontinuous,  and monotone for $U$ an open upper set
and hence $\mu(U)=\int_X \chi_U\,d\mu\leq \int_X \chi_U\ d\nu=\nu(U)$.

\end{proof}

Call a real function $f$ on a partially ordered set $X$ \emph{antitone} if it is order
reversing, i.e., $x\leq y$ implies $f(x)\geq f(y)$.

\begin{corollary}\label{C:SO4}
Let $X$ be a metric space equipped with a closed partial order. Then
the following are equivalent for $\mu,\nu\in\Pro(X):$
\begin{itemize}
\item[(1)]  $\nu\leq \mu;$
\item[(2)] for every antitone  $($bounded$)$ Borel function $f:X\to \RP$, $\int_X f\,d\mu\leq \int_X
f\,d\nu;$
\item[(3)] for every  antitone $($bounded$)$ lower semicontinuous $f:X\to \RP$, $\int_X f\,d\mu\leq \int_X f\,d\nu$.
\end{itemize}
\end{corollary}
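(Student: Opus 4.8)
The plan is to deduce Corollary \ref{C:SO4} directly from Proposition \ref{P:SO3} by the elementary observation that a function $f:X\to\RP$ is antitone precisely when it is monotone with respect to the \emph{reversed} order on $X$. So first I would note that if $\leq$ is a closed partial order on the metric space $X$, then the opposite relation $\leq^{\mathrm{op}}$, defined by $x\leq^{\mathrm{op}}y\iff y\leq x$, is again a closed partial order: its graph is the image of the graph of $\leq$ under the (homeomorphic) coordinate swap on $X\times X$, hence closed. Thus $X$ equipped with $\leq^{\mathrm{op}}$ is again a metric space with a closed partial order, and Proposition \ref{P:SO3} applies verbatim to it.

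Next I would observe how the stochastic order transforms under this reversal. By Remark \ref{R:SO3}, the stochastic order for $\leq$ has the lower-set characterization $\mu\leq\nu\iff\mu(L)\ge\nu(L)$ for every open (equivalently closed, equivalently Borel) lower set $L$ with respect to $\leq$. But the lower sets for $\leq$ are exactly the upper sets for $\leq^{\mathrm{op}}$, so this says precisely that $\nu\leq^{\mathrm{op}}_{\mathrm{st}}\mu$, where $\leq^{\mathrm{op}}_{\mathrm{st}}$ denotes the stochastic order built from $\leq^{\mathrm{op}}$. In short, $\mu\leq_{\mathrm{st}}\nu$ in the original order is the same as $\nu\leq^{\mathrm{op}}_{\mathrm{st}}\mu$ in the reversed order.

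Now I would put the pieces together. A Borel (resp.\ bounded Borel, resp.\ bounded lower semicontinuous) function $f:X\to\RP$ is antitone for $\leq$ if and only if it is monotone for $\leq^{\mathrm{op}}$ — and lower semicontinuity is a purely topological property unaffected by reversing the order. Applying the equivalence (1)$\Leftrightarrow$(2)$\Leftrightarrow$(3) of Proposition \ref{P:SO3} to the ordered metric space $(X,\leq^{\mathrm{op}})$ with the roles of $\mu$ and $\nu$ interchanged, the three conditions of the corollary become, respectively: $\nu\leq^{\mathrm{op}}_{\mathrm{st}}\mu$; for every monotone (bounded) Borel $f$, $\int_X f\,d\mu\le\int_X f\,d\nu$; and for every monotone (bounded) lower semicontinuous $f$, $\int_X f\,d\mu\le\int_X f\,d\nu$. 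By the previous paragraph the first of these is exactly $\mu\leq_{\mathrm{st}}\nu$... wait — I must be careful about the direction: $\nu\leq^{\mathrm{op}}_{\mathrm{st}}\mu$ corresponds to $\mu\leq_{\mathrm{st}}\nu$, yet the corollary asserts $\nu\leq\mu$, so in fact I would apply Proposition \ref{P:SO3} to $(X,\leq^{\mathrm{op}})$ \emph{without} swapping $\mu,\nu$: then (1) reads $\mu\leq^{\mathrm{op}}_{\mathrm{st}}\nu$, which by Remark \ref{R:SO3} is $\nu\leq_{\mathrm{st}}\mu$, matching the corollary, and (2), (3) read exactly as stated with the monotone-for-$\leq^{\mathrm{op}}$ functions, i.e.\ the antitone-for-$\leq$ functions.

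The only genuine content is this translation between the two orders, so there is no real obstacle; the mild care needed is exactly the bookkeeping of which measure plays which role once the order is reversed (one reversal in the order composed with one reversal in the inequality direction, as noted above). I would therefore write the proof as: reversing the order gives a closed partial order to which Proposition \ref{P:SO3} applies; antitone-for-$\leq$ equals monotone-for-$\leq^{\mathrm{op}}$; $\nu\leq_{\mathrm{st}}\mu$ equals $\mu\leq^{\mathrm{op}}_{\mathrm{st}}\nu$ by Remark \ref{R:SO3}; combine.
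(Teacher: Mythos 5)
Your proposal is correct and is essentially the paper's own argument: the paper likewise passes to the order dual $X^{od}$, observes that upper sets for $\leq$ are lower sets for the dual (so by Remark \ref{R:SO3} the stochastic order reverses), identifies antitone functions with monotone functions for the dual order, and applies Proposition \ref{P:SO3}. Your extra care about the direction of the inequality and the closedness of the reversed order is correct bookkeeping that the paper leaves implicit.
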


\begin{proof} Every partially ordered set has a dual order, namely the converse $\geq$ of $\leq$ is taken for the partial order.
Let $X^{od}$ denote the order dual of $X$.  Note that a subset $A$  of $X$ is an upper set in $(X,\leq)$ if and only if it is a lower
set in $X^{od}$.  Using Remark \ref{R:SO3}, one sees that $\mu\leq \nu $ with respect to  $(X,\leq)$ if and only if
$\nu\leq \mu$ with respect to  $X^{od}$.  Since antitone functions convert to monotone functions in the  order dual  of $X$, the corollary
follows from applying the previous proposition to the order dual.
\end{proof}

Finally we consider sufficient conditions for one to define the stochastic order in terms of continuous monotone functions.
\begin{proposition}\label{P:SO5}
Suppose that $(X,d)$ is a metric space equipped with a closed
partial order satisfying the property that given $x\leq y$ and
$x_1\in X$, there exists $y_1\geq x_1$ such that $d(y,y_1)\leq
d(x,x_1)$.  Then for $\mu,\nu\in\Pro(X)$ the following are
equivalent$:$
\begin{itemize}
\item[(1)]  $\mu\leq \nu;$
\item[(2)]  For every continuous $($bounded$)$ monotone $f:X\to\R^+$, $\int_X f\,d\mu\leq \int_X f\,d\nu;$
\item[(3)]   For every continuous $($bounded$)$ antitone $f:X\to\R^+$, $\int_X f\,d\nu\leq \int_X f\,d\mu$.
\end{itemize}
\end{proposition}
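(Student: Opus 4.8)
The plan is to obtain the implications $(1)\Rightarrow(2)$ and $(1)\Rightarrow(3)$ for free from what is already proved, and to put the real work into the two reverse implications, which is where the metric hypothesis enters. Since a continuous monotone function is in particular lower semicontinuous and monotone, $(1)\Rightarrow(2)$ is immediate from Proposition \ref{P:SO3}; and since a continuous antitone function is lower semicontinuous and antitone, $(1)\Rightarrow(3)$ follows from Corollary \ref{C:SO4} with the names $\mu,\nu$ interchanged. In each of (2) and (3) the unbounded version trivially implies the bounded version, so it remains only to show that the bounded form of (2) implies (1), and that the bounded form of (3) implies (1).

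The key observation I would isolate first is that the metric hypothesis forces the distance-to-$A$ function to be antitone whenever $A$ is an upper set. Write $h_A(x):=d(x,A)=\inf_{a\in A}d(x,a)$. If $A$ is a nonempty upper set and $x\le y$, then for $\ve>0$ choose $x_1\in A$ with $d(x,x_1)<h_A(x)+\ve$; the hypothesis supplies $y_1\ge x_1$ with $d(y,y_1)\le d(x,x_1)$, and $y_1\in A$ because $A$ is an upper set, so $h_A(y)\le d(y,y_1)\le d(x,x_1)<h_A(x)+\ve$. Letting $\ve\to0$ gives $h_A(y)\le h_A(x)$, i.e. $h_A$ is antitone. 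Since $h_A$ is also $1$-Lipschitz, the functions
\[ g_n:=\max\{1-n\,h_A,\,0\}\qquad(n\in\N) \]
are continuous, take values in $[0,1]$, and are monotone, being composites of the antitone map $h_A$ with the nonincreasing map $t\mapsto\max\{1-nt,0\}$; moreover, when $A$ is closed, $g_n\downarrow\chi_A$ pointwise as $n\to\infty$.

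With this in hand the two reverse implications are short. For $(2)\Rightarrow(1)$: let $A$ be a closed upper set, which we may assume nonempty; by Proposition \ref{P:SO2} it suffices to prove $\mu(A)\le\nu(A)$. Applying the bounded form of (2) to each $g_n$ gives $\int_X g_n\,d\mu\le\int_X g_n\,d\nu$, and letting $n\to\infty$ (dominated convergence, $0\le g_n\le1$, $g_n\downarrow\chi_A$) yields $\mu(A)\le\nu(A)$. For $(3)\Rightarrow(1)$ I would run the same argument with the functions $1-g_n$, which are continuous, bounded, and antitone and increase to $\chi_{X\setminus A}$: the bounded form of (3) gives $\int_X(1-g_n)\,d\nu\le\int_X(1-g_n)\,d\mu$, equivalently $\int_X g_n\,d\mu\le\int_X g_n\,d\nu$, and passing to the limit again gives $\mu(A)\le\nu(A)$ for every closed upper set $A$, hence $\mu\le\nu$ by Proposition \ref{P:SO2}. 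The only nontrivial point — and the sole place the metric condition is used — is the antitonicity of $h_A$; after that, the proof is the familiar device of approximating $\chi_A$ from above by $\max\{1-n\,d(\cdot,A),0\}$, combined with Proposition \ref{P:SO2}.
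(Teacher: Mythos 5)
Your proof is correct, and the central construction is the same as the paper's: everything hinges on the observation that the metric hypothesis makes $x\mapsto d(x,A)$ antitone whenever $A$ is an upper set, and on approximating indicator functions by truncations of $n\,d(\cdot,A)$ --- your $g_n=\max\{1-n\,d(\cdot,A),0\}$ is exactly $1-f_n$ for the paper's $f_n=\min\{n\,d(\cdot,A),1\}$. The difference is in how the two reverse implications are organized. The paper proves $(3)\Rightarrow(1)$ directly (the antitone $f_n$ increase to $\chi_V$ for $V$ an open lower set, then invoke Remark~\ref{R:SO3}), and then gets $(2)\Rightarrow(1)$ by passing to the order dual $X^{od}$; read literally, that reduction requires the hypothesis of the proposition to hold for $X^{od}$, which is the \emph{dual} metric property of Remark~\ref{R-3.10} rather than the one actually assumed. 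Your version instead proves $(2)\Rightarrow(1)$ directly: the monotone $g_n$ decrease to $\chi_A$ for $A$ a closed upper set, and criterion (2) of Proposition~\ref{P:SO2} finishes it, so the argument stays entirely within the stated hypothesis; your $(3)\Rightarrow(1)$ via the complementary functions $1-g_n$ is then the paper's argument in a different dress. Both routes use the same key lemma, but your arrangement of the $(2)\Rightarrow(1)$ step is the more self-contained one.
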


\begin{proof}
That (1) implies (2) follows from Proposition \ref{P:SO3} and (1)
implies (3) by Corollary \ref{C:SO4}.

(3)$\Rightarrow$(1): Let $V$ be an open lower set with complement $A$, a closed
upper set. For each $n\in\N$, define $f_n:X\to[0,1]$ by $f_n(x)=\min\{nd(x,A),1\}$
and note that $f_n$ is a continuous function into $[0,1]$. To show $f_n$ is antitone,  we
note for any $x\leq y$  and $x_1\in A$, there exists $y_1\geq x_1$
such that $d(y,y_1)\leq d(x,x_1)$.  It follows from $x_1\leq y_1$
that $y_1\in A$, hence $d(y,A)\leq d(x,x_1)$, and thus $d(y,A)\leq
d(x,A)$ since $x_1$ was an arbitrary point of $A$.  Hence
$$ f_n(y)=\min\{nd(y,A),1\}\leq\min\{nd(x,A),1\}=f_n(x).$$
It follows directly from the definition of $f_n$ that the sequence $\{f_n\}$ is an monotonically increasing sequence
with supremum $\chi_V$.  Thus
$$\nu(V)=\int_X\chi_V\,d\nu=\lim_n \int_X f_n\,d\nu \leq \lim_n  \int_X f_n\,d\mu=\int_X \chi_V\,d\mu)=\mu(V).$$
Since $V$ was an arbitrary open lower set, $\mu\leq \nu$ by Remark \ref{R:SO3}.

(2)$\Rightarrow$(1):  Property (2) implies that  $\int_X f\,d\mu\leq \int_X f\,d\nu$ for every continuous antitone function
$f:X^{od}\to \R^+$.  By the preceding paragraph $\nu\leq \mu$ with respect to $X^{od}$, i.e., $\mu\leq \nu$ with respect
to $(X,\leq)$.
\end{proof}

\begin{definition}
A topological space equipped with a closed order is called \emph{monotone normal} if given a closed upper set $A$ and a closed
lower set $B$ such that $A\cap B=\emptyset$, there exist an open upper set $U\supseteq A$ and an open lower set $V\supseteq B$ such that $U\cap V=\emptyset$.
\end{definition}

\begin{remark}\label{R-3.10}
Assume that $(X,d)$ satisfies the property stated in Proposition \ref{P:SO5} and also its dual
version that given $x\le y$ and $y_1\in X$, there is an $x_1\in X$ such that $x_1\le y_1$ and
$d(x,x_1)\le d(y,y_1)$. Then $(X,\le)$ is monotone normal as in the above definition.
Indeed, for any closed upper set $A$ and any closed lower set $B$ with $A\cap B=\emptyset$,
one can easily verify  that the open sets
$$
U:=\{x\in X:d(x,A)<d(x,B)\}\quad\mbox{and}\quad V:=\{x\in X:d(x,A)>d(x,B)\}
$$
satisfy $U\supseteq A$, $V\supseteq B$ and $U\cap V=\emptyset$.
One deduces that $U$ is an upper set and $V$ a lower from the hypothesized property and its dual.
Also, we remark that an open cone in a Banach space as considered in Section 5 satisfies the above
two properties (see Remark \ref{R-5.4}).
\end{remark}

\begin{proposition}\label{P:SO6}
Suppose that $(X,d)$ is a metric space equipped with a closed
partial order for which the space is monotone normal. Then for
$\mu,\nu\in\Pro(X)$ the following are equivalent$:$
\begin{itemize}
\item[(1)]  $\mu\leq \nu;$
\item[(2)]  For every continuous $($bounded$)$ monotone $f:X\to\R^+$, $\int_X f\,d\mu\leq \int_X f\,d\nu$.
\end{itemize}
\end{proposition}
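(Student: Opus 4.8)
The plan is to dispose of (1)$\Rightarrow$(2) at once: a continuous monotone function $f:X\to\RP$ is in particular a monotone Borel function, so the inequality $\int_X f\,d\mu\le\int_X f\,d\nu$ is a special case of Proposition \ref{P:SO3}. All the work is in (2)$\Rightarrow$(1), and the one nontrivial external ingredient is the order-theoretic (Nachbin) form of Urysohn's lemma: in a monotone normal space, whenever $P$ is a closed upper set and $Q$ a closed lower set with $P\cap Q=\emptyset$, there is a continuous monotone $g:X\to[0,1]$ with $g\equiv1$ on $P$ and $g\equiv0$ on $Q$. If one prefers not to cite this, it is proved by the familiar dyadic interpolation: one builds a family of open upper sets indexed by the dyadic rationals in $(0,1)$, using monotone normality at each stage to insert a new set with the appropriate containments, and then reads off $g$. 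That interpolation is the only laborious step; everything else is short.

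Granting the lemma, I would argue directly. By the definition of the stochastic order it suffices to prove $\mu(U)\le\nu(U)$ for every open upper set $U$; writing $A:=X\setminus U$, a closed lower set, this is the same as $\nu(A)\le\mu(A)$. Fix such an $A$ and $\ve>0$. By tightness of $\mu$ there is a compact $K\subseteq U$ with $\mu(K)>\mu(U)-\ve$. By Lemma \ref{L:SO1} the set $\ua K$ is closed; it is an upper set and $K\subseteq\ua K\subseteq\ua U=U$, so $\mu(\ua K)>\mu(U)-\ve$, that is, $\mu(X\setminus\ua K)<\mu(A)+\ve$; note also $A\subseteq X\setminus\ua K$.

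Now $\ua K$ is a closed upper set, $A$ a closed lower set, and $\ua K\cap A=\emptyset$ since $\ua K\subseteq U$. Apply the ordered Urysohn lemma to obtain a continuous monotone $g:X\to[0,1]$ with $g\equiv1$ on $\ua K$ and $g\equiv0$ on $A$, and set $f:=1-g$, a bounded continuous antitone function with $\chi_A\le f\le\chi_{X\setminus\ua K}$ pointwise. Applying hypothesis (2) to the bounded monotone function $g=1-f$ gives $\int_X g\,d\mu\le\int_X g\,d\nu$, hence $\int_X f\,d\nu\le\int_X f\,d\mu$. Therefore
$$\nu(A)=\int_X\chi_A\,d\nu\le\int_X f\,d\nu\le\int_X f\,d\mu\le\int_X\chi_{X\setminus\ua K}\,d\mu=\mu(X\setminus\ua K)<\mu(A)+\ve.$$
Letting $\ve\to0$ gives $\nu(A)\le\mu(A)$, which was exactly what was needed, so $\mu\le\nu$. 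The main obstacle is precisely the ordered Urysohn lemma — once it (or its interpolation proof) is in hand, the argument is a two-line application of tightness together with the closedness of $\ua K$.
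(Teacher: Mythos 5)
Your proof is correct and follows essentially the same route as the paper's: both use inner regularity to find a compact $K$ inside the open upper set, pass to the closed upper set $\ua K$ via Lemma \ref{L:SO1}, and then invoke the ordered (Nachbin) form of Urysohn's lemma supplied by monotone normality to sandwich the measures with a continuous monotone function. The only differences are cosmetic — you argue directly with an $\ve$ and complement to an antitone function, where the paper argues by contradiction and integrates the monotone Urysohn function itself.
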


\begin{proof}
In light of Proposition \ref{P:SO3}  we need only show condition (2) implies condition (1).
Suppose there exists some open upper set $U$ such that $\nu(U)<\mu(U)$.  By inner regularity there exists a
compact set $K\subseteq U$ such that $\nu(U)<\mu(K)\leq \mu(U)$.  The closed upper set $A=\ua K\subseteq U$ also
satisfies  $\nu(U)<\mu(A)\leq \mu(U)$.  Since $X$ is monotone normal, a modification of the usual proof of  Urysohn's  Lemma
yields a continuous monotone function $f:X\to [0,1]$ such that $f(A)=1$ and $f(X\setminus U)=0$;
see for example \cite[Exercise VI-1.16]{GS}.  We then have
$$\mu(A)=\int_X\chi_A\,d\mu\leq \int_X f\,d\mu\leq \int_X f\,d\nu\leq \int_X \chi_U\, d\nu =\nu(U),$$
a contradiction to our choice of $A$.
\end{proof}

\section{Normal cones}
Let $E$ be a Banach space containing an open cone $C$ such that its closure $\overline C$ is
a proper cone, i.e., $\overline C\cap(-\overline C)=\{0\}$.  The cone $\overline C$ defines a
closed partial order on $E$ by $x\leq y$ if $y-x\in\overline C$.  The cone $\overline C$ is
called \emph{normal} if there is a constant $K$ such that $0\leq x\leq y$ implies
$\Vert x\Vert\leq K\Vert y\Vert$.

For $x\leq y$ in $E$, the \emph{order interval} $[x,y]$ is given by
$$[x,y]:=\{w\in E: x\leq w\leq y\}=(x+\overline C)\cap (y-\overline C).$$
Note that $(x+C)\cap(y-C)$ is an open subset contained in $[x,y]$.  A subset $B$ is
\emph{order convex} if $[x,y]\subseteq B$, wherever $x,y\in B$ and $x\leq y$. An alternative
formulation of normality postulates the existence of a basis of order convex neighborhoods at
$0$ and hence by translation at all points (see Section 19.1 of \cite{De}); here neighborhood
of $x$ means a subset containing $x$ in its interior.

\begin{proposition}\label{P:NC1}
Let $E$ be a separable Banach space with an open cone $C$ such that $\overline C$ is normal.
Then restricted to $C$, the $\sigma$-algebra generated by all its open upper sets is the Borel
algebra of $C$.
\end{proposition}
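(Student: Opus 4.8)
The plan is to show that the σ-algebra $\mathcal{A}$ generated by the open upper sets of $C$ contains all open balls, hence contains the full Borel algebra (the reverse containment being trivial since open upper sets are Borel). Since $E$ is separable, so is $C$, and every open set in $C$ is a countable union of open balls; so it suffices to realize each open ball $B(x,r)\cap C$ as a member of $\mathcal{A}$.

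First I would exploit the open cone $C$ and normality to produce, for a point $x \in C$, a neighborhood base at $x$ consisting of sets of the form $(a + C) \cap (b - C)$ with $a \ll x \ll b$ (using $\ll$ informally for the order from $C$). The key geometric input is that $a + C$ is an open upper set and $b - C$ is an open lower set, so $(a+C)$ lies in $\mathcal{A}$ directly, while $(b - C) = C \setminus \ua(\text{something})$ — more precisely, its complement in $C$ is an upper set. I would need to check that $C \setminus (b - C)$ is an upper set: if $w \notin b - C$, i.e. $b - w \notin \overline{C}$, and $w \leq w'$, is $b - w' \notin \overline{C}$? This is not automatic, so instead I would write $b - C$ as the complement of the closed upper set $\ua(C \setminus (b-C))$... which has the same issue. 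The cleaner route: the complement of $b - C$ within $C$ equals $\{w \in C : w \not\leq b\}$, and this IS an upper set (if $w \not\leq b$ and $w \leq w'$ then $w' \not\leq b$, since $w \leq w' \leq b$ would give $w \leq b$). So $b - C$, intersected with $C$, is the complement of an upper set; being open, $(b-C)\cap C$ itself need not be open-upper, but its complement in $C$ is an upper set, and we can approximate: $\{w \in C : w \not\leq b\}$ is an upper set and open (since $\leq$ is closed, $\{w : w \leq b\}$ is closed), hence an open upper set, hence in $\mathcal{A}$; therefore $(b - C) \cap C \in \mathcal{A}$.

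Now $(a + C) \cap C$ is open and upper, so in $\mathcal{A}$, and $(b-C)\cap C \in \mathcal{A}$, so $(a+C)\cap(b-C)\cap C \in \mathcal{A}$. It remains to argue that the family $\{(a+C)\cap(b-C) : a \ll x \ll b\}$ forms a neighborhood base at each $x \in C$; this uses that $C$ is open (so $x - a \in C$ and $b - x \in C$ can be taken small) together with normality, which guarantees that order intervals $[a,b]$ shrink to $x$ in norm as $a, b \to x$ — equivalently, that there is a base of order-convex neighborhoods at $x$, as recalled from Section 19.1 of \cite{De}. Given a ball $B(x,\varepsilon)\cap C$, pick an order-convex neighborhood $W \subseteq B(x,\varepsilon)$ of $x$, then pick $a \in (x - C)\cap W$ and $b \in (x+C)\cap W$ close to $x$ (possible as $C$ is open); order-convexity gives $(a+C)\cap(b-C) \subseteq [a,b] \subseteq W \subseteq B(x,\varepsilon)$. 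Hence every open ball in $C$ is a countable union of sets in $\mathcal{A}$, so lies in $\mathcal{A}$, and therefore $\mathcal{A}$ contains the Borel algebra.

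**The main obstacle** I anticipate is the interplay between normality and openness needed to extract an order-convex neighborhood base at points of $C$ whose members are sandwiched between two points of $C$ — i.e. making precise that one may take $a \ll x \ll b$ with $[a,b]$ arbitrarily small. This is where separability plays no role but the hypotheses on $C$ (open, proper, normal) are all used. Everything else — countable unions, the complement computations showing $(b-C)\cap C$ and $\{w \in C: w \not\leq b\}$ are in $\mathcal{A}$ — is routine bookkeeping with the order.
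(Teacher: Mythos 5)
Your overall strategy is the same as the paper's: use normality to produce an order-convex neighborhood base, sandwich each point of an open set between an open ``order box'' $(a+C)\cap(b-C)$ and the order interval $[a,b]$ contained in that open set, and use separability to extract a countable subcover. But there is one concretely false step: the claim that the complement of $b-C$ within $C$ equals $\{w\in C: w\not\le b\}$. The first set is $\{w\in C: b-w\notin C\}$ while the second is $\{w\in C: b-w\notin\overline C\}$, and these differ whenever $b-w$ lands in $\overline C\setminus C$; for instance $w=b$ itself lies in $C\setminus(b-C)$ (since $0\notin C$) yet satisfies $b\le b$. So the deduction ``therefore $(b-C)\cap C\in\mathcal{A}$'' is not justified as written --- indeed $(b-C)\cap C$ is just another open subset of $C$, and putting it into $\mathcal{A}$ directly is essentially the problem you are trying to solve.

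The repair is immediate and lands you on the paper's proof: what your complement computation actually shows is that $\da b\cap C=\{w\in C:w\le b\}$ is the complement in $C$ of the open upper set $\{w\in C:w\not\le b\}$ (open because $b-\overline C$ is closed), hence lies in $\mathcal{A}$. Since $(a+C)\cap(b-C)\subseteq(a+C)\cap\da b\cap C\subseteq[a,b]$, you can run the covering argument with the $\mathcal{A}$-sets $(a+C)\cap\da b\cap C$, or with the order intervals $[a,b]\cap C$ themselves as the paper does: the open boxes furnish the countable subcover, and the enclosing $\mathcal{A}$-sets still lie inside the given open set and still cover it. With that substitution your argument is correct and essentially coincides with the paper's; the only residual difference is that you use the open upper set $a+C$ where the paper uses the closed upper set $\ua a$, realized there as the countable intersection $\bigcap_n[(a-n^{-1}u)+C]$, so your version is marginally more economical on that point.
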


\begin{proof}
Let $\A$ denote the $\sigma$-algebra of subsets of $C$ generated by the collection of all open
upper sets contained in $C$.  Fix some point $u\in C$. Let $x\in C$.  Set $r_n=1/n$ for
$n\in\N$. Then for each $n$, $x\in (x-r_nu)+C$, an open upper set,
and $\ua x=\bigcap_n[(x-r_nu)+C]$. In fact, for any $y$ in the intersection
$y-(x-r_nu)=(y-x)-r_nu\in C$ and hence the limit $y-x$ is in $\overline C$, i.e.,
$y\in x+\overline C=\ua x$. The converse inclusion is obvious since
$r_nu+\overline C\subseteq C$ so that $x+\overline C\subseteq(x-r_nu)+C$. Thus $\ua x$ is a
countable intersection of open upper sets, hence in $\A$.

Since $\da x$ is closed in $E$, $C\cap(E\setminus \da x)$ is an open upper set. Thus its
complement in $C$, which is $C\cap \da x$ is in $\A$.  Hence for $x\leq y$ in $C$, we note that
$[x,y]=\ua x\cap\da y=\ua x\cap C\cap \da y\in\A$.

Now let $U$ be a nonempty open subset of $C$.  Using the alternative characterization of normality, we may pick for each $x\in U$
an order convex neighborhood $N_x$ of $x$ that is contained in $U$. For some $\ve$ small enough $x-\ve u, x+\ve u\in N_x$, and
hence the order interval $[x-\ve u,x+\ve u]\subseteq N_x$.  Let $B_x:=(x-\ve u+C)\cap (x+\ve u-C)$, an open subset  contained in
$[x-\ve u,x+\ve u]$.  The collection $\{B_x:x\in C\}$ is an open cover of $U$, which by the separability of $E$ (and hence $U$) has
a countable subcover $\{B_{x_n}\}$.  The corresponding $[x_n-\ve_n u,x_n+\ve_n u]$  then also form a countable cover of $U$, and
since from the preceding paragraph each order interval is in $\A$, it follows that $U\in\A$.  Thus $\A$ contains all open sets of $C$, and hence
must be the Borel algebra.
\end{proof}
 We next recall E.~Dynkin's $\pi-\lambda$ theorem.  Let $X$ be a set.  A $\pi$-system is a collection of subsets of $X$ closed under finite
 intersection.  A $\lambda$-system is a collection with $X$ as a member that is closed under complementation and under countable unions
 of pairwise disjoint members of the system. An important observation is that a $\lambda$-system that is also a $\pi$-system is a $\sigma$-algebra.

\begin{theorem}  $($Dynkin's $\pi-\lambda$ Theorem$)$
If a $\pi$-system is contained in a $\lambda$-system, then the $\sigma$-algebra generated by
the $\pi$-system is contained in the $\lambda$-system.
\end{theorem}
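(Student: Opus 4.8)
The plan is the classical ``good sets'' argument. First I would let $\mathcal{L}_0$ denote the intersection of all $\lambda$-systems on $X$ that contain the given $\pi$-system $\mathcal{P}$; since the power set of $X$ is a $\lambda$-system and an arbitrary intersection of $\lambda$-systems is again a $\lambda$-system, $\mathcal{L}_0$ is well-defined and is the \emph{smallest} $\lambda$-system containing $\mathcal{P}$. By hypothesis $\mathcal{P}$ lies in the given $\lambda$-system, so $\mathcal{L}_0$ is contained in it; hence it suffices to prove $\sigma(\mathcal{P})\subseteq\mathcal{L}_0$. Since $\mathcal{L}_0$ is a $\lambda$-system, the observation recorded just before the theorem reduces this to showing that $\mathcal{L}_0$ is a $\pi$-system, i.e.\ closed under finite intersections; then $\mathcal{L}_0$ is a $\sigma$-algebra containing $\mathcal{P}$ and therefore contains $\sigma(\mathcal{P})$.

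Before the main step I would record two elementary facts valid in any $\lambda$-system $\mathcal{L}$: it contains $\emptyset=X^c$, and it is closed under proper differences, meaning $C,D\in\mathcal{L}$ with $C\subseteq D$ implies $D\setminus C\in\mathcal{L}$. The latter holds because $D\setminus C=(D^c\cup C)^c$ and $D^c,C$ are disjoint members of $\mathcal{L}$, so $D^c\cup C$ (viewed as the countable disjoint union $D^c,C,\emptyset,\emptyset,\dots$) lies in $\mathcal{L}$, and then so does its complement.

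The heart of the argument is an auxiliary family. For $A\in\mathcal{L}_0$ put $\mathcal{G}_A:=\{B\subseteq X: A\cap B\in\mathcal{L}_0\}$. I would check that $\mathcal{G}_A$ is a $\lambda$-system: $X\in\mathcal{G}_A$ since $A\cap X=A\in\mathcal{L}_0$; if $B\in\mathcal{G}_A$ then $A\cap B^c=A\setminus(A\cap B)$ is a proper difference of members of $\mathcal{L}_0$, so $B^c\in\mathcal{G}_A$; and if $B_1,B_2,\dots\in\mathcal{G}_A$ are pairwise disjoint then $A\cap\bigcup_n B_n=\bigcup_n(A\cap B_n)$ is a countable disjoint union of members of $\mathcal{L}_0$, so $\bigcup_n B_n\in\mathcal{G}_A$. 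Now run minimality twice. If $A\in\mathcal{P}$, then $\mathcal{P}\subseteq\mathcal{G}_A$ because $\mathcal{P}$ is a $\pi$-system, hence $\mathcal{L}_0\subseteq\mathcal{G}_A$; that is, $A\cap B\in\mathcal{L}_0$ whenever $A\in\mathcal{P}$ and $B\in\mathcal{L}_0$. Reading this the other way, for each fixed $B\in\mathcal{L}_0$ we have $\mathcal{P}\subseteq\mathcal{G}_B$, so again $\mathcal{L}_0\subseteq\mathcal{G}_B$. Thus $A\cap B\in\mathcal{L}_0$ for all $A,B\in\mathcal{L}_0$, i.e.\ $\mathcal{L}_0$ is a $\pi$-system, which completes the proof.

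The one point that requires an idea rather than bookkeeping is the introduction of $\mathcal{G}_A$ together with the two-stage appeal to minimality (first with $A$ ranging over $\mathcal{P}$, then over $\mathcal{L}_0$); the rest is routine verification of the $\lambda$-system axioms, where the only mild care needed is to express finite unions as countable disjoint unions so that the axioms as stated apply.
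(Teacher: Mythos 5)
Your proof is correct: the reduction to showing that the smallest $\lambda$-system $\mathcal{L}_0$ containing the $\pi$-system is itself a $\pi$-system, the closure under proper differences, and the two-stage minimality argument via the auxiliary families $\mathcal{G}_A$ are all carried out accurately. The paper states Dynkin's theorem as a known result without proof, so there is nothing to compare against; your argument is the standard complete one and fills that gap correctly.
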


The stochastic order on $\Pro(X)$ for a metric space $X$ equipped with a closed order is easily seen to be reflexive and transitive, but anti-symmetry is much more difficult to derive.
We now have available the tools we need to show for the open cone $C$ that the stochastic order on $\Pro(C)$ is a partial order.

\begin{theorem}\label{T:NC2}
Let $E$ be a Banach space containing an open cone $C$ such that $\overline C$ is a normal cone. Then the stochastic order on $\Pro(C)$ is a partial order.
\end{theorem}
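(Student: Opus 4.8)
Reflexivity and transitivity of the stochastic order are immediate from the definition, so the whole point is \emph{antisymmetry}: if $\mu\le\nu$ and $\nu\le\mu$ in $\Pro(C)$, then $\mu=\nu$. The hypothesis gives $\mu(U)=\nu(U)$ for every open upper set $U$ of $C$, and the plan is to upgrade this to $\mu=\nu$ on all Borel sets by means of Dynkin's $\pi$--$\lambda$ theorem together with Proposition \ref{P:NC1}. Since the latter is stated only for separable $E$, the first move is to reduce to that case.

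For the reduction, use Proposition \ref{P:La}: the supports $S_\mu$ and $S_\nu$ are separable, so $Y:=\overline{\mathrm{span}}(S_\mu\cup S_\nu)$ is a separable closed subspace of $E$, and $C_Y:=C\cap Y$ is a nonempty open cone in $Y$ (it contains $S_\mu$) whose closure in $Y$ lies inside $\overline C$ and is therefore again a proper normal cone, with the same normality constant. Both $\mu$ and $\nu$ are concentrated on $Y$, hence restrict to Borel probability measures $\mu_Y,\nu_Y$ on $C_Y$. The step that needs care is that the stochastic relation descends to $C_Y$: given an open upper set $U_Y$ of $C_Y$, the set $\widetilde U:=U_Y+C=\bigcup_{x\in U_Y}(x+C)$ is an open upper set of $C$ — it is open as a union of translates of $C$ (and is contained in $C$), and it is an upper set because $\overline C+C\subseteq C$, a containment already used in the proof of Proposition \ref{P:NC1} — and one checks that $\widetilde U\cap Y=U_Y$: the inclusion $\supseteq$ holds because $U_Y$ is open in $Y$ and $C_Y\ne\emptyset$ (write $y=(y-\delta u)+\delta u$ with $u\in C_Y$ and $\delta$ small), and $\subseteq$ holds because $U_Y$ is an upper set of $C_Y$. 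Since $\mu$ and $\nu$ vanish off $Y$, this yields
$$\mu_Y(U_Y)=\mu(\widetilde U)=\nu(\widetilde U)=\nu_Y(U_Y).$$

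Now $C_Y$ is an open cone with normal closure in the \emph{separable} Banach space $Y$, so by Proposition \ref{P:NC1} the $\sigma$-algebra generated by the open upper sets of $C_Y$ is all of $\mathcal B(C_Y)$. These open upper sets form a $\pi$-system (a finite intersection of open upper sets is again open and upper) on which $\mu_Y$ and $\nu_Y$ agree, while $\{A\in\mathcal B(C_Y):\mu_Y(A)=\nu_Y(A)\}$ is a $\lambda$-system (it contains $C_Y$ and, by countable additivity, is closed under complements and countable disjoint unions). Dynkin's $\pi$--$\lambda$ theorem therefore gives $\mu_Y=\nu_Y$ on $\mathcal B(C_Y)$, and since $\mu$ and $\nu$ are concentrated on $C_Y$ this forces $\mu=\nu$ on $\mathcal B(C)$. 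I expect the only genuinely delicate point to be the descent of the stochastic relation to $C_Y$, i.e.\ the verification that every open upper set of the sub-cone $C_Y$ is the trace on $Y$ of an open upper set of $C$; granting that, the remainder is the routine $\pi$--$\lambda$ argument plus Proposition \ref{P:NC1}.
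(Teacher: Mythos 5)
Your proposal is correct and follows essentially the same route as the paper's proof: reduce to a separable closed subspace generated by the two supports, then apply Proposition \ref{P:NC1} and Dynkin's $\pi$--$\lambda$ theorem there. The one place you go beyond the paper is in explicitly verifying (via the set $U_Y+C$) that the stochastic relation descends to the sub-cone $C\cap Y$ --- a step the paper's proof uses implicitly when it invokes the separable case for $\mu\vert_F,\nu\vert_F$ --- and that verification is sound.
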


\begin{proof}  We first consider the case that $E$ is separable.  Let $\mu,\nu\in\Pro(C)$ be
such that $\mu\leq \nu$ and $\nu\leq\mu$. We consider the set $\A$ of all Borel sets $B$ such
that $\mu(B)=\nu(B)$.  By definition of the stochastic order, $U\in \A$ for each open upper set
$U$, and the collection of open upper sets is closed under finite intersection, i.e., is a
$\pi$-system.  Since $\mu$ and $\nu$ are $\sigma$-additive measures, it follows that the collection $\A$ is closed under complementation and union of pairwise disjoint countable
families, so $\A$ is a $\lambda$-system.  By Dynkin's $\pi-\lambda$ theorem the $\sigma$-algebra
generated by the open upper sets is contained in $\A$, but by Proposition \ref{P:NC1} this is the Borel algebra.  Hence $\mu=\nu$ on the Borel algebra, that is to say $\mu=\nu$.

We turn now to the general case in which $E$ may not be separable.  In this case, however,
both $S_\mu$, the support of $\mu$, and $S_\nu$, the support of $\nu$, are separable
(Proposition \ref{P:La}).  Then also the smallest closed Banach subspace $F$ containing
$S_\mu\cup S_\nu$ will be separable, and the restrictions $\mu\vert_F$,
$\nu\vert_F\in\Pro(C\cap F)$.  Since $C\cap F$ is an open cone in $F$ with closure a normal cone, by the first part
of the proof $\mu(B)=\nu(B)$ for all Borel subsets contained in $C\cap F$. Since
$S_\mu\cup S_\nu\subseteq C\cap F$, for any Borel set $B\subseteq C$,
$$\mu(B)=\mu(B\cap S_\mu)=\mu(B\cap (S_\mu\cup S_\nu))=\nu(B\cap(S_\mu\cup S_\nu))=\nu(B\cap S_\nu)=\nu(B).$$
Thus $\mu=\nu$.
\end{proof}

\begin{remark} The techniques of the proof readily extend to any open upper set of $E$, in
particular to $E$ itself. Indeed, Proposition \ref{P:NC1} and Theorem \ref{T:NC2} hold when
restricted to any open upper set in place of $C$. So the stochastic order on $\Pro(E)$ arising
from the conic order of $E$ is also a partial order.
\end{remark}

\section{The Thompson Metric}
We continue in the setting that $E$ is a Banach space and $C$ is an open cone with its closure
$\overline C$ a normal cone.
 A. C. Thompson \cite{Thomp} has proved
that $C$ is a complete metric space with respect to the
\emph{Thompson part metric} defined by
$$d_T(x,y)={\mathrm{max}}\{\log M(x/y), \log M(y/x)\}$$ where
$M(x/y):={\mathrm{inf}}\{\lambda>0: x\leq \lambda y\}=|x|_{y}$.
Furthermore, the metric topology on $C$ arising from the Thompson metric agrees with relative topology inherited from $E$.

The contractivity of addition in $C$ with respect to the Thompson metric  has been observed in various settings
and studied in some detail in \cite{LL12}. We need only the basic formulation.
\begin{lemma}\label{L:TM1}
Addition is contractive on $C$ with respect to the Thompson metric in the sense that for all $x,y,z\in C$, $d_T(x+z,y+z)\leq d_T(x,y)$.
\end{lemma}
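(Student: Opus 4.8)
The plan is to work directly from the definition of the Thompson metric via the quantity $M(x/y)=\inf\{\lambda>0:x\le\lambda y\}$. First I would record the elementary but crucial fact that for $z\in C$ and $\lambda\ge 1$ one has $\lambda z - z = (\lambda-1)z\in\overline C$, so $z\le\lambda z$; more generally $\lambda\le\mu$ with $z\in C$ gives $\lambda z\le\mu z$ since $(\mu-\lambda)z\in\overline C$. This monotonicity in the scalar is what makes the scaling interact well with addition.

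The key step is the inequality $M\big((x+z)/(y+z)\big)\le M(x/y)$ for all $x,y,z\in C$. To see this, set $\lambda=M(x/y)$. If $\lambda\le 1$, then $x\le\lambda y\le y$ (using the monotonicity above), hence $x+z\le y+z\le 1\cdot(y+z)$, so $M((x+z)/(y+z))\le 1\le$ anything relevant; more carefully one still gets $M((x+z)/(y+z))\le 1$. The interesting case is $\lambda\ge 1$. For any $\mu>\lambda$ we have $x\le\mu y$ by definition of the infimum (and closedness of $\overline C$ gives $x\le\lambda y$ as well, but I will use $\mu$ to avoid boundary issues). Then, since $\mu\ge 1$ forces $z\le\mu z$, I add: $x+z\le \mu y + \mu z=\mu(y+z)$, using that $\overline C$ is closed under addition and under multiplication by positive scalars. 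Hence $M((x+z)/(y+z))\le\mu$, and letting $\mu\downarrow\lambda$ gives $M((x+z)/(y+z))\le\lambda=M(x/y)$. Combining the two cases, $M((x+z)/(y+z))\le\max\{M(x/y),1\}$, and since $M(x/y)M(y/x)\ge 1$ always (indeed $x\le M(x/y)\,y$ and $y\le M(y/x)\,x$ combine to $x\le M(x/y)M(y/x)\,x$, forcing the product $\ge1$), at least one of $M(x/y),M(y/x)$ is $\ge1$; but in fact I only need the clean bound $\log M((x+z)/(y+z))\le\log^+ M(x/y)\le\max\{\log M(x/y),\log M(y/x)\}=d_T(x,y)$, using $\log M\ge -d_T(x,y)$ trivially. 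By symmetry in $x$ and $y$, $\log M((y+z)/(x+z))\le d_T(x,y)$ as well.

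Taking the maximum of the two estimates yields $d_T(x+z,y+z)=\max\{\log M((x+z)/(y+z)),\log M((y+z)/(x+z))\}\le d_T(x,y)$, which is the claim.

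The main obstacle is purely bookkeeping around the infimum and the scalars less than $1$: one must be careful that $M(x/y)$ is a genuine minimum (attained, by closedness of $\overline C$), and handle the case $M(x/y)<1$ so as not to claim a false inequality like $M((x+z)/(y+z))\le M(x/y)<1$ when the right object to bound by is $1$. Once the observation $\lambda\ge1\Rightarrow z\le\lambda z$ is in hand, the algebra is routine, and no normality of $\overline C$ is actually needed for this particular lemma — only that $\overline C$ is a closed convex cone defining the order. I would state the proof in three or four lines along exactly these lines.
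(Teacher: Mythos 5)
Your argument is correct. Note that the paper itself gives no proof of this lemma; it simply cites \cite{LL12} for the contractivity of addition, so there is no in-paper argument to compare against. Your proof is the standard one and is complete: the key inequality $x+z\le\mu(y+z)$ for $\mu\ge 1$ with $x\le\mu y$, the separate treatment of the case $M(x/y)\le 1$, and the observation $M(x/y)M(y/x)\ge 1$ (which guarantees $d_T(x,y)\ge 0$ and hence $\log^+M(x/y)\le d_T(x,y)$) are exactly the points that need care, and you handle all of them. Your closing remark that normality of $\overline C$ is not needed here --- only that $\overline C$ is a closed proper convex cone --- is also accurate; properness is what makes $M(x/y)M(y/x)\ge 1$ a genuine contradiction argument rather than vacuous.
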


\begin{remark}\label{R:TM2}
The fact that the Thompson metric is complete allows us to deduce from Proposition \ref{P:La} for $E$ separable that $\Pro(C)$ consists of
all Borel probability measures and for $E$ an arbitrary Banach space that $\Pro(C)$ consists of the $\tau$-additive probability measures.
\end{remark}

\begin{proposition}\label{P:TM3}
The cone $C$ equipped with the Thompson metric satisfies the property that given $x\leq y$ and
$x_1\in C$, there exists $y_1\geq x_1$ such that $d_T(y,y_1)\leq d_T(x,x_1)$.  Hence for $\mu,\nu\in \Pro(C)$,
$\mu\leq \nu$ in the stochastic order if and only if for every continuous $($bounded$)$ monotone
$f:X\to\R^+$, $\int_X f\,d\mu\leq \int_X f\,d\nu$.
\end{proposition}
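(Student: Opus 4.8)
The plan is to obtain $y_1$ from $x_1$ by the same translation that carries $x$ to $y$, and then to deduce the metric inequality from the contractivity of addition (Lemma~\ref{L:TM1}); the ``Hence'' clause will then follow at once from Proposition~\ref{P:SO5}, since $(C,d_T)$ is a metric space carrying the closed cone order.

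Given $x\le y$ in $C$ and an arbitrary $x_1\in C$, I would put $z:=y-x\in\overline C$ and $y_1:=x_1+z$. Because $C+\overline C\subseteq C$ (a standard property of the open cone $C$, and one already used implicitly in the proof of Proposition~\ref{P:NC1}), we have $y_1\in C$; and $y_1-x_1=z=y-x\in\overline C$, so $y_1\ge x_1$. Note that $y=x+z$ and $y_1=x_1+z$ differ from $x$ and $x_1$, respectively, by one and the same element $z$, so that $d_T(y,y_1)=d_T(x+z,\,x_1+z)$.

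It remains to prove $d_T(x+z,x_1+z)\le d_T(x,x_1)$. This is exactly Lemma~\ref{L:TM1} when $z\in C$, but here $z=y-x$ may lie on the boundary of the cone --- indeed $z=0$ when $x=y$ --- so a brief limiting step is needed. Fix $u\in C$ and set $z_n:=z+\tfrac1n u\in C$; by Lemma~\ref{L:TM1}, $d_T(x+z_n,x_1+z_n)\le d_T(x,x_1)$ for all $n$. Since $z_n\to z$ in norm, $x+z_n\to x+z$ and $x_1+z_n\to x_1+z$ in norm, and as all these points lie in $C$, where the Thompson topology coincides with the relative norm topology (Thompson's theorem), the convergence also holds in $d_T$; by continuity of the metric, $d_T(x+z,x_1+z)=\lim_n d_T(x+z_n,x_1+z_n)\le d_T(x,x_1)$. (Alternatively one can argue directly from $M(a/b)=\inf\{\lambda>0:a\le\lambda b\}$, whose infimum is attained since $\overline C$ is closed: if $M(x/x_1)\ge1$, then $x\le\lambda x_1$ with $\lambda\ge1$ gives $x+z\le\lambda x_1+\lambda z=\lambda(x_1+z)$, while if $M(x/x_1)<1$, then $x\le x_1$ gives $x+z\le x_1+z$; either way $\log M((x+z)/(x_1+z))\le d_T(x,x_1)$, and symmetrically with the roles of $x$ and $x_1$ interchanged.)

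With this property for $(C,d_T)$ established, the stated equivalence of $\mu\le\nu$ in the stochastic order with ``$\int_C f\,d\mu\le\int_C f\,d\nu$ for every continuous bounded monotone $f\colon C\to\R^+$'' is precisely the equivalence of conditions (1) and (2) in Proposition~\ref{P:SO5}. The only point requiring care is the passage from $z\in C$ to $z\in\overline C$ in the contractivity estimate; everything else is routine.
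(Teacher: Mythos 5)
Your proof is correct and follows essentially the same route as the paper's: take $y_1=x_1+(y-x)$, invoke the contractivity of translation (Lemma~\ref{L:TM1}), and conclude via Proposition~\ref{P:SO5}. Your extra limiting step (or the direct estimate on $M$) to cover the case $y-x\in\overline C\setminus C$ is a legitimate refinement---the paper applies Lemma~\ref{L:TM1} to $z=y-x$ without comment even though that lemma is stated only for $z\in C$---but it does not change the approach.
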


\begin{proof}
Suppose $x\leq y$ and $x_1\in C$.  The contractivity of the Thompson metric (Lemma \ref{L:TM1})
implies for $y_1=x_1+(y-x)$ that
$$d_T(y,y_1)=d_T(x+(y-x),x_1+(y-x))\leq d_T(x,x_1).$$
The last assertion of the proposition now follows from Proposition \ref{P:SO5}.
\end{proof}

\begin{remark}\label{R-5.4}
Here is a second proof of Proposition \ref{P:TM3}. Assume that $x\le y$ in $C$. For every
$x_1\in C$ let $\alpha:=d_T(x,x_1)$ so that $e^{-\alpha}x\le x_1\le e^\alpha x$. Set
$y_1:=e^\alpha y$; then $y_1\ge e^\alpha x\ge x_1$ and $y\le y_1=e^\alpha y$, so
$d_T(y,y_1)\le\alpha=d_T(x,x_1)$. Similarly one can show the dual version mentioned in
Remark \ref{R-3.10}. For every $y_1\in C$ let $\beta:=d_T(y,y_1)$ and $x_1:=e^{-\beta}x$; then
$x_1\le e^{-\beta}y\le y_1$ and $e^{-\beta}x=x_1\le x$ so that $d_T(x,x_1)\le\beta= d_T(y,y_1)$.
\end{remark}
Recall that one of the characterizations of the weak topology on any metric space, in particular on $\Pro(C)$,
 is that a net  $\mu_\alpha \to \mu$ weakly if and only if $\lim_\alpha \int_C f\, d\mu_\alpha\to \int_C f\,d\mu$ for all
continuous bounded functions into $\R$ (or $\R^+)$; see \cite{Bi}.
\begin{proposition}\label{P:TM4}
 The stochastic partial order is a closed subset of $\Pro(C)\times \Pro(C)$ endowed with the product weak topology.
 \end{proposition}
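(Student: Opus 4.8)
The plan is to use the functional characterization of the stochastic order together with the defining property of the weak topology. By Proposition~\ref{P:TM3}, for $\mu,\nu\in\Pro(C)$ we have $\mu\leq\nu$ if and only if $\int_C f\,d\mu\leq\int_C f\,d\nu$ for every bounded continuous monotone $f:C\to\R^+$. This reformulation is the key, because each such inequality is a \emph{closed} condition in the weak topology: the map $(\mu,\nu)\mapsto \int_C f\,d\nu-\int_C f\,d\mu$ is, by the very definition of the weak topology recalled just before the proposition, continuous on $\Pro(C)\times\Pro(C)$, and the set where a continuous real-valued function is $\geq 0$ is closed.

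So concretely I would argue as follows. Fix a bounded continuous monotone $f:C\to\R^+$ and let
$$
H_f:=\Big\{(\mu,\nu)\in\Pro(C)\times\Pro(C): \int_C f\,d\mu\leq \int_C f\,d\nu\Big\}.
$$
The functional $\mu\mapsto\int_C f\,d\mu$ is weakly continuous on $\Pro(C)$ (since $f$ is bounded and continuous), hence so is $(\mu,\nu)\mapsto\int_C f\,d\nu-\int_C f\,d\mu$ on the product, and $H_f$ is the preimage of $[0,\infty)$ under this map, hence closed. By Proposition~\ref{P:TM3} the graph of the stochastic order is exactly
$$
\{(\mu,\nu):\mu\leq\nu\}=\bigcap_{f} H_f,
$$
the intersection taken over all bounded continuous monotone $f:C\to\R^+$. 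An arbitrary intersection of closed sets is closed, so the stochastic partial order is a closed subset of $\Pro(C)\times\Pro(C)$ in the product weak topology.

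There is no serious obstacle here; the one point that must be invoked with care is that Proposition~\ref{P:TM3} applies, i.e.\ that $C$ with the Thompson metric satisfies the hypothesis of Proposition~\ref{P:SO5} — but this is precisely the content of Proposition~\ref{P:TM3}, which has already been established. I would also add the one-line remark that, since the Wasserstein topology on $\Pro(C)$ is finer than the weak topology, closedness in the weak product topology immediately yields closedness in the Wasserstein product topology, so the stochastic order is a closed partial order in that sense as well, combining this with Theorem~\ref{T:NC2}.
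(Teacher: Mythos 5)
Your proof is correct and takes essentially the same route as the paper: both rest on Proposition~\ref{P:TM3}'s characterization of $\mu\leq\nu$ via bounded continuous monotone $f$ and the weak continuity of $\mu\mapsto\int_C f\,d\mu$. The paper phrases this with convergent nets while you phrase it as an intersection of closed sets $H_f$, but the underlying argument is identical.
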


 \begin{proof}
 Let $\mu_\alpha\to \mu$ and $\nu_\alpha\to\nu$ weakly in $\Pro(C)$, where $\mu_\alpha\leq \nu_\alpha$ for each $\alpha$.
 From Proposition \ref{P:TM3} for $f:C\to \R^+$ continuous bounded and monotone
 $$\int_C f\,d\mu=\lim_\alpha \int_C f\,d\mu_\alpha\leq \lim_\alpha\int_Cf\,d\nu_\alpha=\int_C f\, d\nu.$$
 Thus again from Proposition \ref{P:TM3}, $\mu\leq \nu$.
 \end{proof}

Let $(X,\mathcal{M})$ be a \emph{measure space}, a set $X$ equipped
with a $\sigma$-algebra $\mathcal{M}$, and $(Y,d)$ a metric space. A
function  $f:X\to Y$ is \emph{measurable}  if
$f^{-1}(A)\in\mathcal{M}$  whenever $A\in\mathcal{B}(Y)$. For $f$ to
be measurable, it suffices that $f^{-1}(U)\in\mathcal{M}$ for each
open subset $U$ of $Y$.  Hence  continuous functions are measurable
in the case $X$ is a metrizable space and
$\mathcal{M}=\mathcal{B}(X)$, the Borel algebra. A measurable map
$f:X\to Y$ between metric spaces  induces the \emph{push-forward} map
$f_*:\Pro(X)\to\Pro(Y)$ defined by $f_*(\mu)(B)=\mu(f^{-1}(B))$ for
$\mu\in\Pro(X)$ and $B\in\mathcal{B}(Y)$.  Note for $f$ continuous
that $\mathrm{supp}(f_*(\mu))=f(\mathrm{supp}(\mu))^-$, the closure
of the image of the support of $\mu$.

Let $(X,d)$ be a complete metric space, and for $p\in[1,\infty)$ let
$\Pro^p(X):=\{ \mu\in\Pro(X): \int_Xd(x,y)^p\,d\mu(y)<\infty\}$, the set of $\tau$-additive Borel probability measures on $X$ with finite $p$th moment (defined independently of the choice of $x\in X$).
The {\em $p$-Wasserstein metric} $d_p^W$ on $\Pro^p(X)$ is defined by
\begin{align}\label{F-5.1}
d_p^W(\mu,\nu):=\biggl[\inf_{\pi\in\Pi(\mu,\nu)}\int_{X\times X}d(x,y)^p
\,d\pi(x,y)\biggr]^{1/p},\qquad\mu,\nu\in\Pro^p(X),
\end{align}
where $\Pi(\mu,\nu)$ is the set of all couplings for $\mu,\nu$, i.e.,
$\pi\in\Pro(X\times X)$ whose marginals are $\mu$ and $\nu$.

Recall (see, e.g.,
\cite{St}) that $\Pro^p(X)$ is a complete metric space with the metric $d_p^W$, and that
the Wasserstein convergence implies weak convergence. Hence we have the following corollary
of the preceding proposition.

\begin{corollary}
The stochastic partial order is a closed subset of $\Pro^1(C)\times \Pro^1(C)$ endowed with the product Wasserstein topology $($induced by $d_1^W$$)$.
\end{corollary}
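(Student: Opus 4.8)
The plan is to deduce this corollary directly from Proposition \ref{P:TM4} together with the two standard facts recalled just above the statement: that $(\Pro^1(C), d_1^W)$ is a complete metric space, and that Wasserstein convergence (for $p=1$) implies weak convergence. First I would note that the inclusion $\Pro^1(C)\hookrightarrow \Pro(C)$ is continuous from the Wasserstein topology to the weak topology, hence so is the product map $\Pro^1(C)\times\Pro^1(C)\to\Pro(C)\times\Pro(C)$; the stochastic order $R:=\{(\mu,\nu):\mu\le\nu\}$ is the same subset of $\Pro^1(C)\times\Pro^1(C)$ as the preimage under this inclusion of the stochastic order on $\Pro(C)\times\Pro(C)$ (the definition of $\mu\le\nu$ does not reference moments). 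Since the latter is closed by Proposition \ref{P:TM4}, its preimage under a continuous map is closed, which is exactly the claim.

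Concretely, I would write it as a sequence argument (since all spaces here are metric, sequences suffice): suppose $(\mu_n,\nu_n)\to(\mu,\nu)$ in the product $d_1^W$-topology with $\mu_n\le\nu_n$ for all $n$. Then $\mu_n\to\mu$ and $\nu_n\to\nu$ in $d_1^W$, hence weakly in $\Pro(C)$ by the recalled implication. By Proposition \ref{P:TM4}, the weak limit pair satisfies $\mu\le\nu$. Therefore $R$ is sequentially closed, and since $\Pro^1(C)\times\Pro^1(C)$ is metrizable, $R$ is closed.

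There is essentially no obstacle here; the only point deserving a word of care is that the set $R$ being considered inside $\Pro^1(C)\times\Pro^1(C)$ really is the restriction of the set considered in Proposition \ref{P:TM4} — i.e., that the stochastic order relation is unambiguous and does not change when we pass from $\Pro(C)$ to the subset $\Pro^1(C)$. This is immediate from the definition $\mu\le\nu\iff \mu(U)\le\nu(U)$ for all open upper sets $U$, which makes no reference to the metric or to finiteness of moments. I would state the corollary's proof in two or three lines and leave it at that, citing \cite{St} for completeness of $(\Pro^1(C),d_1^W)$ and the Wasserstein-to-weak implication, and Proposition \ref{P:TM4} for the closedness of the order in the weak product topology.

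\begin{proof}
Since $C$ with the Thompson metric is a complete metric space, $\Pro^1(C)$ equipped with $d_1^W$ is a complete (in particular metrizable) space, and $d_1^W$-convergence implies weak convergence in $\Pro(C)$ (see \cite{St}). Suppose $(\mu_n,\nu_n)\to(\mu,\nu)$ in the product Wasserstein topology with $\mu_n\le\nu_n$ for all $n$. Then $\mu_n\to\mu$ and $\nu_n\to\nu$ weakly in $\Pro(C)$, so by Proposition \ref{P:TM4} we get $\mu\le\nu$. As the stochastic order relation on $\Pro^1(C)$ coincides with the restriction of that on $\Pro(C)$ (its definition involves only the values on open upper sets), the stochastic order is sequentially closed, hence closed, in $\Pro^1(C)\times\Pro^1(C)$.
\end{proof}
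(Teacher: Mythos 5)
Your proof is correct and is exactly the argument the paper intends: the corollary is stated as an immediate consequence of Proposition \ref{P:TM4} together with the fact that $d_1^W$-convergence implies weak convergence. The extra care you take about the order relation on $\Pro^1(C)$ being the restriction of that on $\Pro(C)$ is a harmless (and correct) remark, but otherwise your route coincides with the paper's.
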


We recall the notion of a contractive barycentric map.
\begin{definition}
Let $(X,d)$ be a complete  metric space.
   A map $\beta:\mathcal{P}^1(X)\to X$ is called a
\emph{contractive barycentric map} if
\begin{itemize}
\item[(i)] $\beta(\delta_x)=x$ for all $x\in X$;
\item[(ii)] $d(\beta(\mu),\beta(\nu))\leq d_1^W(\mu,\nu)$ for all $\mu,\nu\in\mathcal{P}^1(X)$.
\end{itemize}
For  a closed partial order $\leq $ on $X,$
$\beta:\mathcal{P}^1(X)\to X$ is said to be \emph{monotonic} if
$\beta(\mu)\leq \beta(\nu),$ whenever $\mu\leq \nu.$
\end{definition}

A complete  partially ordered metric space equipped with a monotonic contractive
barycenter has become an important object of study in recent years.

We consider the semigroup of mappings on $\psi:C\to C$ satisfying
$x\leq \psi(x)$ for all $x\in C.$ For instance, every translation
$\tau_{a}(x)=a+x$, $a\in \overline C$, satisfies this condition and also is
non-expansive for the Thompson metric.

\begin{corollary}\label{L-4}
Let $\psi:C\to C$ be a Lipschitzian map with respect to  the
Thompson metric $d_T$ such that  $x\le\psi(x)$ for all $x\in C$.
Then for every $\mu\in {\mathcal P}^{1}(C)$, we have
$\psi_*\mu\in{\mathcal P}^{1}(C)$ and $\mu\le\psi_*\mu$. If further,
$\beta:{\mathcal P}^{1}(C)\to C$ is a monotonic barycentric map,
then $\beta(\mu)\leq \beta(\psi_*\mu)$ for any $\mu\in {\mathcal
P}^1(C).$
\end{corollary}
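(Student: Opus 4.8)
The plan is to verify each assertion in turn, using the Lipschitz hypothesis on $\psi$ to handle the moment condition and the pointwise inequality $x\le\psi(x)$ together with Proposition~\ref{P:SO3} (or Proposition~\ref{P:TM3}) to obtain $\mu\le\psi_*\mu$. First I would check that $\psi_*\mu\in\mathcal{P}^1(C)$: since $\psi$ is Lipschitz with some constant $L$ for $d_T$, we have $d_T(\psi(x),x_0)\le d_T(\psi(x),\psi(x_0))+d_T(\psi(x_0),x_0)\le L\,d_T(x,x_0)+d_T(\psi(x_0),x_0)$ for a fixed basepoint $x_0\in C$, so $\int_C d_T(y,x_0)\,d(\psi_*\mu)(y)=\int_C d_T(\psi(x),x_0)\,d\mu(x)<\infty$ because $\mu\in\mathcal{P}^1(C)$; this also shows $\psi_*\mu$ is a well-defined element of $\Pro(C)$ since $\psi$, being Lipschitz, is continuous hence Borel measurable, and the push-forward of a $\tau$-additive measure by a continuous map is $\tau$-additive (its support is contained in $\psi(\mathrm{supp}\,\mu)^-$, which is separable).

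Next I would establish $\mu\le\psi_*\mu$. By Proposition~\ref{P:SO3}(2) it suffices to show $\int_C f\,d\mu\le\int_C f\,d(\psi_*\mu)$ for every bounded monotone Borel function $f:C\to\RP$. By the change-of-variables formula for push-forwards, $\int_C f\,d(\psi_*\mu)=\int_C (f\circ\psi)\,d\mu$. Since $x\le\psi(x)$ for all $x\in C$ and $f$ is monotone, $f(x)\le f(\psi(x))$ pointwise, so $\int_C f\,d\mu\le\int_C (f\circ\psi)\,d\mu=\int_C f\,d(\psi_*\mu)$. Hence $\mu\le\psi_*\mu$. (Alternatively one could invoke Proposition~\ref{P:TM3}, restricting to continuous bounded monotone $f$, since the Thompson metric satisfies its hypothesis; the argument is identical.)

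Finally, the last assertion is immediate: if $\beta:\mathcal{P}^1(C)\to C$ is monotonic and $\mu\le\psi_*\mu$ in the stochastic order, then by the definition of monotonicity $\beta(\mu)\le\beta(\psi_*\mu)$. I do not anticipate a serious obstacle here; the only point requiring a little care is the verification that $\psi_*\mu$ lands in $\Pro(C)$ rather than merely in the set of all Borel probability measures on $C$ — i.e., that it is $\tau$-additive — but this follows from the continuity of $\psi$ and the closure-of-support description of push-forwards recalled just before the statement, together with Proposition~\ref{P:La}. The finite-first-moment bound is the other mechanical step, and it uses only the Lipschitz (in fact, merely "bounded on bounded sets" would suffice) property of $\psi$.
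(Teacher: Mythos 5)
Your proof is correct, and it takes a genuinely different and more direct route than the paper's. The paper first verifies the claim for finitely supported uniform measures $\mu=\frac{1}{n}\sum_j\delta_{x_j}$ (where $x_j\le\psi(x_j)$ gives $\frac{1}{n}\sum_j\delta_{x_j}\le\frac{1}{n}\sum_j\delta_{\psi(x_j)}$ directly), then approximates a general $\mu\in\mathcal{P}^1(C)$ by such measures in the $d_1^W$-metric (citing \cite[Theorem 4.7]{La17}), pushes the approximants forward using the $d_1^W$-continuity of $\psi_*$ (which is where the Lipschitz hypothesis enters), and concludes via the closedness of the stochastic order in the Wasserstein topology. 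You instead argue in one step via the change-of-variables formula: $\int_C f\,d(\psi_*\mu)=\int_C(f\circ\psi)\,d\mu\ge\int_C f\,d\mu$ for every bounded monotone Borel $f$, since $f(x)\le f(\psi(x))$ pointwise, and then invoke Proposition \ref{P:SO3}. This is more elementary (no approximation theorem, no closedness of the order needed) and it isolates exactly where each hypothesis is used: the Lipschitz condition is needed only for the moment bound $\psi_*\mu\in\mathcal{P}^1(C)$, while the order inequality $\mu\le\psi_*\mu$ requires only that $\psi$ be Borel measurable with $\tau$-additive push-forward and satisfy $x\le\psi(x)$ --- a point the paper's proof obscures. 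Your attention to the $\tau$-additivity of $\psi_*\mu$ (so that it genuinely lies in $\Pro(C)$ where the stochastic order is defined) is a detail the paper leaves implicit. Both treatments of the final assertion about $\beta$ are the same one-line appeal to the definition of monotonicity.
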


\begin{proof}
Let $\mu=\frac{1}{n}\sum_{j=1}^{n}\delta_{x_{j}}$ be a finitely
supported uniform measure on $C$.  From $x_{j}\leq \psi(x_{j})$ for
all $j$,   we have
$$\mu=\frac{1}{n}\sum_{j=1}^{n}\delta_{x_{j}}\leq
\frac{1}{n}\sum_{j=1}^{n}\delta_{\psi(x_{j})}=\psi_*\mu.$$

Now for $\mu \in{\mathcal P}^1({\Bbb P}),$ pick  a sequence
$\mu_{n}$ of finitely supported uniform measures converging to $\mu$
from below for the Wasserstein metric associated to  the Thompson
metric (\cite[Theorem 4.7]{La17}). Then
$$\mu\leq \mu_{n}\leq \psi_*\mu_{n}\to \psi_*\mu$$
as $n\to \infty$ and hence $\mu\leq \psi_*\mu,$ by the previous
corollary.
\end{proof}

\section{Order-completeness}

In this section we always assume that the Banach space $E$ is {\em finite-dimensional} (hence
separable) and, as in Section 4, $C$ is an open cone in $E$ whose closure $\overline C$ is a
proper cone. Note (see Section 19.1 of \cite{De}) that the finite dimensionality assumption
automatically implies that $\overline C$ is a normal cone. We consider $C$ as a complete metric
space equipped with the Thompson part metric $d_T$ and the $p$-Wasserstein metric $d_p^W$ on
$\Pro^p(C)$, $1\le p<\infty$, given in \eqref{F-5.1} with $d=d_T$.

The next elementary lemma is given just for completeness.

\begin{lemma}\label{L-6.1}
\begin{itemize}
\item[(1)] For each $x,y\in C$, the order interval $[x,y]=(x+\overline C)\cap(y-\overline C)$
is a compact subset of $C$.
\item[(2)] For any $u\in C$, $\bigcup_{k=1}^\infty[k^{-1}u,ku]=C$.
\end{itemize}
\end{lemma}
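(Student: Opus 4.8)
The plan is to prove the two assertions separately, using that $E$ is finite-dimensional and that $\overline C$ is a proper normal cone.

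For part (1), the first step is to recall that $[x,y] = (x+\overline C)\cap(y-\overline C)$ is closed in $E$, being the intersection of two closed sets (each $\pm\overline C$ is closed and translation is a homeomorphism). Since $E$ is finite-dimensional, it suffices to show $[x,y]$ is bounded; then it is compact by Heine--Borel, and it remains only to check $[x,y]\subseteq C$. Boundedness follows from normality of $\overline C$: if $w\in[x,y]$, then $0\le w-x\le y-x$, so $\|w-x\|\le K\|y-x\|$ by the normality constant $K$, whence $\|w\|\le \|x\|+K\|y-x\|$, a bound independent of $w$. For the inclusion $[x,y]\subseteq C$, I would use that $C$ is open and $x\in C$: pick $\varepsilon>0$ with the ball $B(x,\varepsilon)\subseteq C$; then for small $t>0$, $x-tu\in C$ for a fixed $u\in C$ (indeed $x-tu\to x$), and since $u\in C$ and $w\ge x$ we get $w = (x-tu) + tu + (w-x) \in C + \overline C \subseteq C$ because $C+\overline C\subseteq C$ (an open cone plus its closure stays in the open cone). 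Actually the cleanest route: $w\in[x,y]$ means $w\ge x$, i.e. $w\in x+\overline C$; since $x\in C$ and $C$ is open, $x+\overline C\subseteq C$ — to see this, note $C = \bigcup_{t>0}(x - t u + C)$ reasoning as in Proposition~\ref{P:NC1}, or simply observe $x\in C$ open implies $x-tu\in C$ for small $t$, so $x = (x-tu)+tu \in C$, and then $x+\overline C = (x-tu) + (tu+\overline C)\subseteq (x-tu)+C\subseteq C$. Either way $[x,y]\subseteq x+\overline C\subseteq C$.

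For part (2), fix $u\in C$ and let $x\in C$ be arbitrary. I must produce $k\in\N$ with $k^{-1}u\le x\le ku$, i.e. $x - k^{-1}u\in\overline C$ and $ku-x\in\overline C$. This is exactly the statement that the Thompson metric $d_T(x,u)$ is finite, which holds because $C$ is a single Thompson-metric component (Thompson's theorem, recalled in Section~5): $M(x/u)=\inf\{\lambda>0: x\le\lambda u\}<\infty$ and $M(u/x)<\infty$, so choosing any integer $k\ge\max\{M(x/u),\,M(u/x)\}$ gives $x\le k u$ and $u\le k x$, hence $k^{-1}u\le x$. Thus $x\in[k^{-1}u,ku]$. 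Conversely each $[k^{-1}u,ku]\subseteq C$ by part (1), giving the reverse inclusion, so the union equals $C$. Alternatively, without invoking the Thompson metric explicitly: since $C$ is open and convex-ish (an open cone), $x,u\in C$ with $C$ open gives a small ball around $u$ inside $C$, and standard cone arguments yield the domination in both directions; but quoting finiteness of $d_T$ is the shortest path.

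The only mild obstacle is the inclusion $[x,y]\subseteq C$ (as opposed to merely $[x,y]\subseteq E$): one must genuinely use that $C$ is \emph{open} and the algebraic fact $C+\overline C\subseteq C$, which in turn follows from $C$ being open together with $\overline C$ being the closure of $C$. Everything else — closedness, boundedness via normality, Heine--Borel — is routine.
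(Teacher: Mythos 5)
Your proof is correct and follows essentially the same route as the paper's: closedness plus a normality bound plus Heine--Borel for compactness, the inclusion $[x,y]\subseteq x+\overline C\subseteq C+\overline C\subseteq C$ for part (1), and domination of $x$ by multiples of $u$ in both directions for part (2). The only cosmetic difference is that in (2) you invoke finiteness of $M(x/u)$ and $M(u/x)$ (i.e.\ of the Thompson metric), whereas the paper directly picks $k$ large enough that $x-k^{-1}u\in C$ and $u-k^{-1}x\in C$ using openness of $C$ --- the same underlying fact.
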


\begin{proof}
(1):
Since $x+\overline C\subset C+\overline C\subset C$, $[x,y]\subset C$. It is also clear that
$[x,y]$ is a closed subset of $E$. Since $\overline C$ is a normal cone, we see that if
$z\in[x,y]$ then $\|z\|\le K\|y\|$. Hence, $[x,y]$ is a bounded closed subset of $E$. Since $E$
is finite-dimensional, $[x,y]$ is compact in $E$ and so is in $(C,d)$.

(2):
Let $u,x\in C$. For $k\in\N$ sufficiently large, $x-k^{-1}u\in C$ and  $u-k^{-1}x\in C$ so that
$x\in(k^{-1}u+C)\cap(ku-C)$. Therefore, $x\in[k^{-1}u,ku]$, which implies the assertion.
\end{proof}

Before showing order-completeness, it is convenient to derive the compactness of order
intervals in $\Pro(C)$ as well as in $\Pro^p(C)$.

\begin{proposition}\label{P-6.2}
Let $\nu_1,\nu_2\in\Pro(C)$ with $\nu_1\le\nu_2$.
\begin{itemize}
\item[(1)] The order interval $[\nu_1,\nu_2]:=\{\mu\in\Pro(C):\nu_1\le\mu\le\nu_2\}$ is compact in the
weak topology.
\item[(2)] Let $1\le p<\infty$. If $\nu_1,\nu_2\in\Pro^p(C)$, then $[\nu_1,\nu_2]\subset\Pro^p(C)$ and
it is compact in the $d_p^W$-topology.
\end{itemize}
\end{proposition}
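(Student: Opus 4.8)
The plan is to derive both statements from the fact that a measure lying in an order interval of $\Pro(C)$ is squeezed between $\nu_1$ and $\nu_2$, combined with the compact exhaustion of $C$ by the order intervals $[k^{-1}u,ku]$ of Lemma~\ref{L-6.1}. I fix a base point $u\in C$ throughout.

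For (1), I would first note that $[\nu_1,\nu_2]$ is weakly closed, being the intersection of $\{\mu:\nu_1\le\mu\}$ and $\{\mu:\mu\le\nu_2\}$, both weakly closed by Proposition~\ref{P:TM4}. Since $(C,d_T)$ is a complete separable metric space, Prokhorov's theorem then reduces the problem to showing that $[\nu_1,\nu_2]$ is uniformly tight. Here I would use that $[k^{-1}u,ku]=(k^{-1}u+\overline C)\cap(ku-\overline C)$ is compact (Lemma~\ref{L-6.1}(1)) and that its complement in $C$ is the union of the open upper set $\{x\in C:x\not\le ku\}$ and the open lower set $\{x\in C:x\not\ge k^{-1}u\}$. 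Comparing any $\mu\in[\nu_1,\nu_2]$ with $\nu_2$ on the former and with $\nu_1$ on the latter (by the definition of the stochastic order and Remark~\ref{R:SO3}),
$$\mu\bigl(C\setminus[k^{-1}u,ku]\bigr)\le\nu_2\bigl(\{x\in C:x\not\le ku\}\bigr)+\nu_1\bigl(\{x\in C:x\not\ge k^{-1}u\}\bigr);$$
by Lemma~\ref{L-6.1}(2) the sets $(ku-\overline C)\cap C$ and $(k^{-1}u+\overline C)\cap C$ increase to $C$, so by continuity from above of the finite measures the right-hand side tends to $0$ as $k\to\infty$, uniformly in $\mu$. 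Thus $[\nu_1,\nu_2]$ is uniformly tight, hence relatively weakly compact, hence (being weakly closed) weakly compact.

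For (2), I would set $g(y):=\log M(y/u)$ and $h(y):=\log M(u/y)$, so that $d_T(u,y)=\max\{g(y),h(y)\}\ge0$; a short check shows $g,h$ are $1$-Lipschitz for $d_T$ (hence Borel), $g$ monotone and $h$ antitone. The decisive step is to split $C$ into the closed upper set $U^*:=\{y:g(y)\ge h(y)\}$ and the open lower set $L^*:=\{y:g(y)<h(y)\}$, on which $d_T(u,\cdot)$ coincides with $g\ge0$, resp.\ with $h\ge0$. Writing $g^+:=\max\{g,0\}$ and $h^+:=\max\{h,0\}$, one then has $d_T(u,\cdot)^p=F+G$, where $F:=(g^+)^p\chi_{U^*}$ is a non-negative monotone Borel function and $G:=(h^+)^p\chi_{L^*}$ a non-negative antitone Borel function (each a product of non-negative monotone, resp.\ antitone, factors). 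For $\mu\in[\nu_1,\nu_2]$, Proposition~\ref{P:SO3} gives $\int_CF\,d\mu\le\int_CF\,d\nu_2$ and Corollary~\ref{C:SO4} gives $\int_CG\,d\mu\le\int_CG\,d\nu_1$; since $F,G\le d_T(u,\cdot)^p$, summing these yields
$$\int_Cd_T(u,y)^p\,d\mu(y)\le\int_Cd_T(u,y)^p\,d\nu_2(y)+\int_Cd_T(u,y)^p\,d\nu_1(y)<\infty,$$
so $[\nu_1,\nu_2]\subseteq\Pro^p(C)$ with a $p$-th moment bound uniform over the interval. The same device gives uniform $p$-integrability: on $U^*$ one has $\{d_T(u,\cdot)>R\}=\{g>R\}$ and on $L^*$, $\{d_T(u,\cdot)>R\}=\{h>R\}$, so $d_T(u,\cdot)^p\chi_{\{d_T(u,\cdot)>R\}}=F\chi_{\{g>R\}}+G\chi_{\{h>R\}}$ is again a non-negative monotone plus a non-negative antitone Borel function ($\{g>R\}$ an upper set, $\{h>R\}$ a lower set), whence
$$\sup_{\mu\in[\nu_1,\nu_2]}\int_{\{d_T(u,y)>R\}}d_T(u,y)^p\,d\mu(y)\le\int_{\{g>R\}}(g^+)^p\,d\nu_2+\int_{\{h>R\}}(h^+)^p\,d\nu_1;$$
the right-hand side tends to $0$ as $R\to\infty$ by dominated convergence, since $(g^+)^p,(h^+)^p\le d_T(u,\cdot)^p$ is integrable for $\nu_2$, resp.\ $\nu_1$, and the sets $\{g>R\},\{h>R\}$ shrink to $\emptyset$. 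Finally, $[\nu_1,\nu_2]$ is $d_p^W$-closed ($d_p^W$-convergence implies weak convergence, and $[\nu_1,\nu_2]$ is weakly closed) and, by (1), tight; the standard characterization of relatively compact subsets of the Wasserstein space $(\Pro^p(C),d_p^W)$ — tightness together with uniform $p$-integrability — then yields that $[\nu_1,\nu_2]$ is $d_p^W$-compact.

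I expect the main obstacle to be the second part, and within it the uniform $p$-integrability. The naive estimate $d_T(u,\cdot)^p\le 2^{p-1}\bigl((g^+)^p+(h^+)^p\bigr)$ is inadequate, because restricting to the tail $\{d_T(u,\cdot)>R\}$ — which is neither an upper nor a lower set — leaves cross terms such as $\int_{\{h>R\}}(g^+)^p\,d\mu$, to which the monotone/antitone comparisons of Section~3 do not apply (indeed $(g^+)^p$ is unbounded on the lower set $\{h>R\}$). Partitioning $C$ into $U^*$ and $L^*$, on which $d_T(u,\cdot)$ is genuinely monotone, resp.\ antitone, eliminates these cross terms. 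The remaining verifications — products of non-negative monotone functions are monotone, $g$ and $h$ are Lipschitz, $\{g\ge h\}$ is an upper set, and $d_T(u,\cdot)$ has the claimed monotonicity on each of $U^*,L^*$ — are routine.
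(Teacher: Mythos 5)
Your proof is correct and follows essentially the same route as the paper: part (1) via tightness from the exhaustion of $C$ by compact order intervals $[k^{-1}u,ku]$, comparison with $\nu_2$ on the upper piece and $\nu_1$ on the lower piece of the complement, Prohorov's theorem, and weak closedness from Proposition \ref{P:TM4}; part (2) via uniform $p$-integrability obtained by splitting the tail $\{d_T(\cdot,u)>R\}$ into an upper set where $d_T(\cdot,u)^p$ is monotone and a lower set where it is antitone (your $F\chi_{\{g>R\}}$ and $G\chi_{\{h>R\}}$ are exactly the paper's $1_{U_R}d(\cdot,u)^p$ and $1_{V_R}d(\cdot,u)^p$), followed by the standard Wasserstein compactness criterion. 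Your closing observation about why the naive bound $d_T^p\le 2^{p-1}((g^+)^p+(h^+)^p)$ fails and the $U^*/L^*$ partition is needed is precisely the point of the paper's construction.
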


\begin{proof}
(1):
Choose any $u\in C$. For every $\epsilon>0$ Lemma \ref{L-6.1}\,(2) implies that there exists $k\in\N$
such that $(\nu_1+\nu_2)(C\setminus[k^{-1}u,ku])<\epsilon$. We write
$C\setminus[k^{-1}u,ku]=U_k\cup V_k$, where $U_k:=\{x\in C:x\not\le ku\}$ and
$V_k:=\{x\in C:x\not\ge k^{-1}u\}$. It is clear that $U_k$ is an upper open set while $V_k$ is a lower
open set. Hence, if $\mu\in[\nu_1,\nu_2]$, then we have
\begin{align*}
\mu(C\setminus[k^{-1}u,ku])
&\le\mu(U_k)+\mu(V_k)\le\nu_2(U_k)+\nu_1(V_k) \\
&\le(\nu_1+\nu_2)(C\setminus[k^{-1}u,ku])<\epsilon
\end{align*}
(for $\mu(V_k)\le\nu_1(V_k)$, see Remark \ref{R:SO3}). By Lemma \ref{L-6.1}\,(1), this says
that $[\nu_1,\nu_2]$ is tight, and so it is relatively compact in $\Pro(C)$ in the weak topology
due to Prohorov's theorem (see \cite{Bi}). Since $[\nu_1,\nu_2]$ is closed in the weak topology
by Proposition \ref{P:TM4}, $[\nu_1,\nu_2]$ is compact in the weak topology.

(2):
Next, assume that $\nu_1,\nu_2\in\Pro^p(C)$ for $p\in[1,\infty)$. First we prove the following
``tightness" condition:
\begin{align}\label{F-1}
\lim_{R\to\infty}\sup_{\mu\in[\nu_1,\mu_2]}\int_{d(x,u)>R}d(x,u)^p\,d\mu(x)=0
\end{align}
for some $u\in C$. Choose any $u\in C$. For every $R\ge0$ set
\begin{align*}
U_R&:=\{x\in C:M(x/u)>e^R,\,M(x/u)\ge M(u/x)\}, \\
V_R&:=\{x\in C:M(u/x)>e^R,\,M(x/u)<M(u/x)\}.
\end{align*}
Then it is immediate to see that
$$
\{x\in C:d(x,u)>R\}=U_R\cup V_R\ \ \mbox{(disjoint sum)}.
$$
Hence, for any $\mu\in\Pro(C)$ we have
\begin{align}\label{F-2}
\int_{d(x,u)>R}d(x,u)^p\,d\mu(x)
=\int_C1_{U_R}(x)d(x,u)^p\,d\mu(x)+\int_C1_{V_R}(x)d(x,u)^p\,d\mu(x).
\end{align}
When $x\in U_R$ and $x\le y\in C$, since $M(x/u)\le M(y/u)$ and $M(u/x)\ge M(u/y)$, we find that
$M(y/u)\ge M(x/u)>e^R$ and $M(y/u)\ge M(u/y)$ so that $y\in U_R$. Therefore, $U_R$ is an upper Borel
set. Moreover,
$$
d(x,u)=\log M(x/u)\le\log M(y/u)=d(y,u).
$$
Hence it follows that $x\in C\mapsto1_{U_R}(x)d(x,u)^p$ is a monotone Borel function. When $x\in V_R$
and $x\ge y\in C$, since $M(x/u)\ge M(y/u)$ and $M(u/x)\le M(u/y)$, $M(u/y)\ge M(u/x)>e^R$ and
$M(y/u)<M(u/y)$ so that $y\in V_R$. Therefore, $V_R$ is a lower open set and
$$
d(x,u)=\log M(u/x)\le\log M(u/y)=d(y,u).
$$
Hence we see that $x\in C\mapsto1_{V_R}(x)d(x,u)^p$ is an antitone Borel function. If
$\mu\in[\nu_1,\nu_2]$, then by Proposition \ref{P:SO3} and Corollary \ref{C:SO4} applied to
the right-hand side of \eqref{F-2} we obtain
\begin{align*}
\int_{d(x,u)>R}d(x,u)^p\,d\mu(x)
&\le\int_C1_{U_R}(x)d(x,u)^p\,d\nu_2(x)+\int_C1_{V_R}(x)d(x,u)^p\,d\nu_1(x) \\
&\le\int_{d(x,u)>R}d(x,u)^p\,d(\nu_1+\nu_2)(x)\longrightarrow0
\end{align*}
as $R\to\infty$, since $\int_Cd(x,u)^p\,d(\nu_1+\nu_2)(x)<\infty$. Hence \eqref{F-1} has been proved,
which in particular implies that $[\nu_1,\nu_2]\subseteq\Pro^p(C)$. Moreover, from a basic fact on the
convergence in Wasserstein spaces \cite[Theorem 7.12]{Vi}, we see that $[\nu_1,\nu_2]$ is compact in
the $d_p^W$-topology. Indeed, for every sequence $\{\mu_n\}$ in $[\nu_1,\nu_2]$, from the assertion (1)
one can choose a subsequence $\{\mu_{n(m)}\}$ such that $\mu_{n(m)}\to\mu$ weakly for some
$\mu\in\Pro(C)$. Hence, it follows from \cite[Theorem 7.12]{Vi} that $\mu\in\Pro^p(C)$ and
$d_p^W(\mu_{n(m)},\mu)\to0$. Note that the limit $\mu$ is in $[\nu_1,\nu_2]$, since the
$d_p^W$-convergence implies the weak convergence. Thus, $[\nu_1,\nu_2]$ is $d_p^W$-compact.
\end{proof}

The next proposition gives the order-completeness (or a monotone convergence property) of the
stochastic order on $\Pro(C)$ in the weak topology.

\begin{proposition}\label{P-6.3}
Let $\mu_n,\nu\in\Pro(C)$ for $n\in\N$.
\begin{itemize}
\item[(1)] If $\mu_1\le\mu_2\le\dots\le\nu$, then there exists a $\mu\in\Pro(C)$ such that
$\mu_n\le\mu\le\nu$
for all $n$ and $\mu_n\to\mu$ weakly.
\item[(2)] If $\mu_1\ge\mu_2\ge\dots\ge\nu$, then there exists a $\mu\in\Pro(C)$ such that
$\mu_n\ge\mu\ge\nu$ for all $n$ and $\mu_n\to\mu$ weakly.
\end{itemize}
\end{proposition}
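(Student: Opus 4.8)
The plan is to deduce this from three facts already established for the cone $C$ (which is an open cone in the finite-dimensional, hence separable, Banach space $E$, so $\overline C$ is automatically normal): the weak compactness of order intervals in $\Pro(C)$ (Proposition \ref{P-6.2}\,(1)), the closedness of the stochastic order in the product weak topology (Proposition \ref{P:TM4}), and its antisymmetry (Theorem \ref{T:NC2}). It is enough to treat (1); statement (2) follows by the mirror-image argument, using the order interval $[\nu,\mu_1]$ in place of $[\mu_1,\nu]$ and reversing all inequalities throughout.

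For (1), note that $\mu_1\le\mu_n\le\nu$ for every $n$, so the whole sequence lies in the order interval $[\mu_1,\nu]$, which by Proposition \ref{P-6.2}\,(1) is compact in the weak topology; moreover this weak topology is metrizable since $C$ is separable. Hence there is a subsequence $\mu_{n(k)}$ converging weakly to some $\mu\in[\mu_1,\nu]$, so in particular $\mu\le\nu$. To see that $\mu_n\le\mu$ for each fixed $n$, take $k$ large enough that $n(k)\ge n$; then $\mu_n\le\mu_{n(k)}$ by monotonicity of the sequence, and letting $k\to\infty$ the closedness of the stochastic order (Proposition \ref{P:TM4}) gives $\mu_n\le\mu$. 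The same argument applies to \emph{any} weak subsequential limit of $\{\mu_n\}$: every such limit $\mu'$ satisfies $\mu_n\le\mu'$ for all $n$.

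It remains to upgrade subsequential convergence to convergence of the full sequence, and this is the one step that genuinely needs antisymmetry. Let $\mu'$ be any weak subsequential limit of $\{\mu_n\}$. By the previous paragraph $\mu_n\le\mu'$ for all $n$, so in particular $\mu_{n(k)}\le\mu'$ for all $k$; letting $k\to\infty$ and using closedness again gives $\mu\le\mu'$. Interchanging the roles of $\mu$ and $\mu'$ yields $\mu'\le\mu$, whence $\mu=\mu'$ by Theorem \ref{T:NC2}. Thus $\{\mu_n\}$ is a sequence in the compact metrizable space $[\mu_1,\nu]$ with a unique subsequential limit, so it converges weakly to that limit $\mu$; combined with $\mu_n\le\mu\le\nu$ for all $n$, this proves (1). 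The main obstacle is precisely this final uniqueness step: compactness of the order interval alone only yields a convergent subsequence, and without the antisymmetry of the stochastic order one could not conclude that the monotone sequence has a single weak limit.
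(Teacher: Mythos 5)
Your proof is correct and follows essentially the same route as the paper: compactness of the order interval $[\mu_1,\nu]$ from Proposition \ref{P-6.2}\,(1), uniqueness of the weak cluster point of the sequence, and antisymmetry from Theorem \ref{T:NC2} to force convergence of the whole sequence. The only cosmetic difference is that you obtain the uniqueness by two applications of the closedness of the stochastic order (Proposition \ref{P:TM4}), whereas the paper notes that for every continuous bounded monotone $f$ the real sequence $\int_C f\,d\mu_n$ is increasing and hence has a single limit, and then applies Proposition \ref{P:TM3}; both are valid and rest on the same underlying facts.
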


\begin{proof}
(1):
Since $\{\mu_n\}\subset[\mu_1,\nu]$ and Proposition \ref{P-6.2}\,(1) says that $[\mu_1,\nu]$
is compact in the weak topology, to see that $\mu_n\to\mu$ weakly for some $\mu\in\Pro(C)$,
it suffices to prove that a weak limit point of $\{\mu_n\}$ is unique. Now, let
$\mu,\mu'\in\Pro(C)$ be weak limit points of $\{\mu_n\}$, so there are subsequences
$\{\mu_{n(l)}\}$ and $\{\mu_{n(m)}\}$ such that $\mu_{n(l)}\to\mu$ and $\mu_{n(m)}\to\mu'$
weakly. Let $f:C\to[0,\infty)$ be any continuous bounded and monotone function. Since
$\int_Cf\,d\mu_n$ is increasing in $n$ by Proposition \ref{P:TM3}, we have
$$
\int_Cf\,d\mu=\lim_l\int_Cf\,d\mu_{n(l)}=\lim_m\int_Cf\,d\mu_{n(m)}=\int_Cf\,d\mu'.
$$
This implies by Proposition \ref{P:TM3} again that $\mu\le\mu'$ and $\mu'\le\mu$ so that
$\mu=\mu'$ by Theorem \ref{T:NC2}. Therefore $\mu_n\to\mu\in\Pro(C)$ weakly. Moreover, since
$\int_Cf\,d\mu_n\le\int_Cf\,d\mu\le\int_Cf\,d\nu$ for every continuous bounded and monotone
function $f\ge0$ on $C$, we have $\mu_n\le\mu\le\nu$ for all $n$.

(2):
The proof is similar to the above with a slight modification.
\end{proof}

The next proposition gives the order-completeness of the stochastic order restricted on
$\Pro^p(C)$ in the $d_p^W$-convergence.

\begin{proposition}\label{P-6.4}
Let $1\le p<\infty$ and $\mu_n,\nu\in\Pro^p(C)$ for $n\in\N$.
\begin{itemize}
\item[(1)] If $\mu_1\le\mu_2\le\dots\le\nu$, then there exists a $\mu\in\Pro^p(C)$ such that
$\mu_n\le\mu\le\nu$ for all $n$ and $d_p^W(\mu_n,\mu)\to0$.
\item[(2)] If $\mu_1\ge\mu_2\ge\dots\ge\nu$, then there exists a $\mu\in\Pro^p(C)$ such that
$\mu_n\ge\mu\ge\nu$ for all $n$ and $d_p^W(\mu_n,\mu)\to0$.
\end{itemize}
\end{proposition}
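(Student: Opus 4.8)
The plan is to bootstrap from the weak-topology order-completeness already established in Proposition \ref{P-6.3} and upgrade the mode of convergence to $d_p^W$ using the uniform integrability estimate \eqref{F-1} from Proposition \ref{P-6.2}. I will treat case (1) in detail; case (2) is symmetric.

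First I would apply Proposition \ref{P-6.3}\,(1) to the chain $\mu_1\le\mu_2\le\dots\le\nu$, all of which lie in $\Pro^p(C)\subseteq\Pro(C)$: this yields $\mu\in\Pro(C)$ with $\mu_n\le\mu\le\nu$ for all $n$ and $\mu_n\to\mu$ weakly. Since $\mu_1\le\mu\le\nu$ with $\nu_1:=\mu_1$ and $\nu_2:=\nu$ in $\Pro^p(C)$, Proposition \ref{P-6.2}\,(2) immediately gives $\mu\in[\mu_1,\nu]\subseteq\Pro^p(C)$, so the limit has finite $p$th moment. The order relations $\mu_n\le\mu\le\nu$ are already in hand from Proposition \ref{P-6.3}.

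The remaining point is to promote weak convergence $\mu_n\to\mu$ to $d_p^W(\mu_n,\mu)\to0$. Here I would invoke the standard characterization (e.g.\ \cite[Theorem 7.12]{Vi}, already cited in the proof of Proposition \ref{P-6.2}): for measures in $\Pro^p(C)$ on a complete metric space, $d_p^W(\mu_n,\mu)\to0$ if and only if $\mu_n\to\mu$ weakly \emph{and} the uniform $p$th-moment tail condition
$$
\lim_{R\to\infty}\sup_n\int_{d(x,u)>R}d(x,u)^p\,d\mu_n(x)=0
$$
holds for some (equivalently, any) $u\in C$. But $\{\mu_n\}\subseteq[\mu_1,\nu]$, so this is exactly the content of \eqref{F-1} with $\nu_1=\mu_1$, $\nu_2=\nu$. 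Hence $d_p^W(\mu_n,\mu)\to0$, completing case (1). For case (2), I would apply Proposition \ref{P-6.3}\,(2) to get the weak limit $\mu$ with $\mu_n\ge\mu\ge\nu$, observe $\{\mu_n\}\cup\{\mu\}\subseteq[\nu,\mu_1]$, and again apply Proposition \ref{P-6.2}\,(2) and \eqref{F-1} (now with $\nu_1=\nu$, $\nu_2=\mu_1$) together with \cite[Theorem 7.12]{Vi}.

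I do not anticipate a genuine obstacle: all the analytic work (tightness, the monotone/antitone splitting of the tail integrand, the Prohorov argument) was already done in Propositions \ref{P-6.2} and \ref{P-6.3}. The only thing to be slightly careful about is making sure the uniform tail bound \eqref{F-1} is stated for the right order interval containing the whole sequence $\{\mu_n\}$ and its limit, and citing the Wasserstein convergence criterion in the precise form ``weak convergence plus uniform $p$th moments implies $d_p^W$ convergence'' rather than merely ``$d_p^W$ convergence implies weak convergence.'' This is the direction of \cite[Theorem 7.12]{Vi} that is actually needed and is the one subtlety worth flagging explicitly in the write-up.
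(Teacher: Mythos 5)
Your proof is correct and rests on the same ingredients as the paper's: Propositions \ref{P-6.2} and \ref{P-6.3} together with the Wasserstein convergence criterion of \cite[Theorem 7.12]{Vi}. The only difference is organizational --- the paper invokes the $d_p^W$-compactness of the order interval from Proposition \ref{P-6.2}\,(2) and concludes by uniqueness of $d_p^W$-limit points (which reduces, as you note, to the weak-limit uniqueness argument in the proof of Proposition \ref{P-6.3}), whereas you take the weak limit from Proposition \ref{P-6.3} and upgrade it directly to $d_p^W$-convergence via the uniform tail estimate \eqref{F-1}; both routes are valid, and your explicit flagging of the needed direction of \cite[Theorem 7.12]{Vi} (weak convergence plus uniform $p$th-moment tails implies $d_p^W$-convergence) is exactly the point the paper leaves implicit.
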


\begin{proof}
For both assertions (1) and (2), by Proposition \ref{P-6.2}\,(2) it suffices to prove that a
$d_p^W$-limit point of $\{\mu_n\}$ is unique. Since the $d_p^W$-convergence implies the weak
convergence, this is immediate from the proof of Proposition \ref{P-6.3}.
\end{proof}

\begin{corollary}\label{C-6.5}
Let $\mu,\mu_n\in\Pro(C)$, $n\in\N$. Then $\mu_n$ weakly converges to $\mu$ increasingly
$($resp.\ decreasingly$)$ in the stochastic order if and only if $\int_Cf\,d\mu_n$
increases $($resp.\ decreases$)$ to $\int_Cf\,d\mu$ for every continuous bounded and monotone
$f:C\to\R^+$. Moreover, if $\mu,\mu_n\in\Pro^p(C)$ where $1\le p<\infty$, then the above
conditions are also equivalent to $\mu_n$ converges to $\mu$ in the metric $d_p^W$ increasingly
$($resp.\ decreasingly$)$ in the stochastic order.
\end{corollary}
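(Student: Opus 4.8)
The plan is to extract both directions from the machinery already assembled, treating the weak-topology statement first and then bootstrapping to the Wasserstein statement. For the weak-topology equivalence, the forward direction is essentially immediate: if $\mu_n\to\mu$ weakly with $\mu_1\le\mu_2\le\cdots$, then by Proposition \ref{P:TM3} the sequence $n\mapsto\int_Cf\,d\mu_n$ is nondecreasing for every continuous bounded monotone $f\colon C\to\R^+$, and weak convergence forces $\int_Cf\,d\mu_n\to\int_Cf\,d\mu$; a nondecreasing sequence converging to a limit increases to that limit. The decreasing case is dual. For the converse direction, suppose $\int_Cf\,d\mu_n$ increases to $\int_Cf\,d\mu$ for every such $f$. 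Monotonicity of the integrals in $n$, together with Proposition \ref{P:TM3}, gives $\mu_1\le\mu_2\le\cdots$, and the inequality $\int_Cf\,d\mu_n\le\int_Cf\,d\mu$ for all continuous bounded monotone $f\ge0$ gives $\mu_n\le\mu$ for all $n$ (again Proposition \ref{P:TM3}). Thus $\{\mu_n\}\subseteq[\mu_1,\mu]$, which is weakly compact by Proposition \ref{P-6.2}\,(1). Any weak limit point $\mu'$ satisfies $\int_Cf\,d\mu'=\lim_n\int_Cf\,d\mu_n=\int_Cf\,d\mu$ for all continuous bounded monotone $f$, whence $\mu'\le\mu$ and $\mu\le\mu'$ by Proposition \ref{P:TM3}, so $\mu'=\mu$ by Theorem \ref{T:NC2}. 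A sequence in a compact metric space with a unique limit point converges to it, so $\mu_n\to\mu$ weakly, and the increase is part of the hypothesis. This is really just a repackaging of the proof of Proposition \ref{P-6.3}\,(1).

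For the $p$-Wasserstein refinement, assume now $\mu,\mu_n\in\Pro^p(C)$. Wasserstein convergence $d_p^W(\mu_n,\mu)\to0$ implies weak convergence (as recalled before the corollary), and it also implies convergence of integrals of continuous bounded functions, so the implication from the $d_p^W$-condition to the integral condition is immediate, and combined with the first part it is also equivalent to weak convergence with the monotonicity. It remains to show that the integral condition (equivalently, weak increasing convergence) implies $d_p^W(\mu_n,\mu)\to0$. As above the integral condition yields $\mu_n\le\mu\le\mu$ — more precisely $\mu_1\le\mu_2\le\cdots\le\mu$ — so $\{\mu_n\}\subseteq[\mu_1,\mu]$, and since $\mu_1,\mu\in\Pro^p(C)$, Proposition \ref{P-6.2}\,(2) shows $[\mu_1,\mu]$ is $d_p^W$-compact. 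By the first part $\mu_n\to\mu$ weakly; a $d_p^W$-convergent subsequence of $\{\mu_n\}$ must have weak limit equal to its $d_p^W$-limit, hence equal to $\mu$, so $\mu$ is the unique $d_p^W$-limit point of the sequence in the compact set $[\mu_1,\mu]$, forcing $d_p^W(\mu_n,\mu)\to0$. This is exactly the argument of Proposition \ref{P-6.4}.

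The only mild subtlety — and the step I would be most careful to state correctly — is the role of the hypothesis $\mu\in\Pro(C)$ (resp.\ $\mu\in\Pro^p(C)$) on the \emph{limit}: in the converse direction one is handed the candidate limit $\mu$ in advance and must verify both that $\{\mu_n\}$ lands in an order interval with endpoints already known to be in $\Pro(C)$ (resp.\ $\Pro^p(C)$) and that the limit produced by compactness coincides with the given $\mu$. Both points are handled by Proposition \ref{P:TM3} (to pin down the order relations from integral inequalities) and Theorem \ref{T:NC2} (for uniqueness via antisymmetry). Everything else is bookkeeping: translating "increasingly in the stochastic order" into "$n\mapsto\int_Cf\,d\mu_n$ nondecreasing" via Proposition \ref{P:TM3}, and invoking the compactness Propositions \ref{P-6.2}, \ref{P-6.3}, \ref{P-6.4} already in hand.
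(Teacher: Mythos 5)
Your proposal is correct and follows essentially the same route as the paper: use Proposition \ref{P:TM3} to turn the integral condition into the chain $\mu_1\le\mu_2\le\cdots\le\mu$, obtain a weak (resp.\ $d_p^W$) limit from the order-completeness/compactness results, and identify it with $\mu$ via Proposition \ref{P:TM3} and the antisymmetry of Theorem \ref{T:NC2}. The only cosmetic difference is that you re-run the compactness-plus-unique-limit-point argument inside the proof, whereas the paper simply cites Propositions \ref{P-6.3} and \ref{P-6.4} for the existence of the limit.
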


\begin{proof}
Assume that for any $f:C\to\R^+$ as stated above, $\int_Cf\,d\mu_n$ increases (resp.\ decreases)
to $\int_Cf\,d\mu$. Then by Proposition \ref{P:TM3}, $\mu_1\le\mu_2\le\dots\le\mu$
($\mu_1\ge\mu_2\ge\dots\ge\mu$). By Proposition \ref{P-6.3} there exists a $\mu_0\in\Pro(C)$
such that $\mu_n\to\mu_0$ weakly. By assumption, $\int_Cf\,d\mu=\int_Cf\,d\mu_0$ for any $f$
as above, which implies that $\mu=\mu_0$ by Theorem \ref{T:NC2} and Proposition \ref{P:TM3}.
Hence $\mu_n\to\mu$ weakly. Since the converse implication is obvious, the first assertion has
been shown. The second follows from Proposition \ref{P-6.4}.
\end{proof}

\begin{remark}\label{R-6.6}\rm
It is straightforward to see that $x\mapsto\delta_x$ is a homeomorphism from $(C,d_T)$ into
$\Pro(C)$ with the weak topology and also an isometry from $(C,d_T)$ into $(\Pro^1(C),d_1^W)$.
Hence each conclusion of (1) and (2) of Proposition \ref{P-6.2} implies that the interval
$[x_1,x_2]$ in $C$ is compact for any $x_1,x_2\in C$ with $x_1\le x_2$. Since
$(2^{-1}u+C)\cap(2u-C)$ is a non-empty open subset of $[2^{-1}u,2u]$ for any $u\in C$, this
forces $E$ to be finite-dimensional. Thus, the finite dimensionality of $E$ is essential in
Proposition \ref{P-6.2}. But, there might be a possibility for Propositions \ref{P-6.3} and
\ref{P-6.4} to hold true beyond the finite-dimensional case.
\end{remark}

\section{AGH mean inequalities}
In this section we consider the Banach space $E=\mathcal{B}(H)$ of
bounded operators on a (general) Hilbert space $H$ with the operator
norm, and the open cone $C=\bP$  consisting of positive invertible
operators on $H$. Note that $\bP$ is a complete metric space with
the Thompson metric $d_T$. Let $\Lambda$ be the {\em Karcher
barycenter} on ${\mathcal P}^1(\bP)$; in particular, for a finitely
and uniformly supported measure $\mu={1\over
n}\sum_{j=1}^n\delta_{A_j}$,
$$\Lambda_n(A_{1},\dots,A_{n}):=\Lambda\left(\frac{1}{n}\sum_{j=1}^{n}\delta_{A_{j}}\right)$$
is the {\em Karcher} or {\em least squares mean} of $(A_1,\dots,A_n)\in\bP^n$, which is
uniquely determined by the {\em Karcher equation}
$$\sum_{j=1}^{n}\log (X^{-1/2}A_{j}X^{-1/2})=0.$$
Moreover, $\Lambda:{\mathcal P}^1(\bP)\to \bP$ is contractive
$$
d_T(\Lambda(\mu),\Lambda(\nu))\leq d_1^W(\mu,\nu),\qquad\mu,\nu\in{\mathcal P}^1(\bP).
$$
See, e.g., \cite{LL13,LL14,LL17} for the Karcher equation and Karcher (or Cartan) barycenter.

We consider the complete metric $d_n$ on the product space $\bP^n$
\begin{align}\label{F-7.4}
d_{n}((A_{1},\dots,A_{n}),
(B_{1},\dots,B_{n})):=\frac{1}{n}\sum_{j=1}^{n}d_T(A_{j},B_{j}).
\end{align}
The contraction property of the Karcher barycenter implies that the map
$$\Lambda_{n}:\bP^{n}\to \bP,\quad (A_{1},\dots,A_{n})\mapsto
\Lambda_n(A_{1},\dots,A_{n})$$
is a Lipschitz map with Lipschitz constant $1$.

The arithmetic and harmonic means
$${\mathcal A}_{n}(A_{1},\dots,A_{n})=\frac{1}{n}\sum_{j=1}^{n}A_{j},\qquad {\mathcal
H}_{n}(A_{1},\dots,A_{n})=\left[\frac{1}{n}\sum_{j=1}^{n}A_{j}^{-1}\right]^{-1}$$
are continuous from $\bP^n$ to $\bP$ and are also Lipschitz with
Lipschitz constant $1$ for the sup-metric on $\bP^n$
\begin{align}\label{E:supm}
d_n^\infty((A_1,\dots,A_n),(B_1,\dots,B_n)):=\max_{1\le j\le n}d_T(A_j,B_j).
\end{align}

\begin{definition}
For each $n\in\N$ and $\mu_{1},\dots,\mu_{n}\in\Pro(\bP)$, note that the product measure
$\mu_1\times\dots\times\mu_n$ is in $\Pro(\bP^n)$. This is easily verified since the support
of the product measure is the product of the supports of $\mu_i$'s having the measure $1$.
As seen from Proposition \ref{P:La}, note also that the push-forward of a $\tau$-additive
measure by a continuous map is $\tau$-additive. Hence one can define the following three
measures in $\Pro(\bP)$, regarded as the geometric, arithmetic and harmonic means of
$\mu_1,\dots,\mu_n$:
\begin{align}
\Lambda(\mu_{1},\dots,\mu_{n})
&:=(\Lambda_{n})_{*}(\mu_{1}\times\cdots\times\mu_{n}), \label{F-7.6}\\
{\mathcal A}(\mu_{1},\dots,\mu_{n})
&:=({\mathcal A}_{n})_{*}(\mu_{1}\times\dots\times\mu_{n}), \label{F-7.7}\\
{\mathcal H}(\mu_{1},\dots,\mu_{n})
&:=({\mathcal H}_{n})_{*}(\mu_{1}\times\dots\times\mu_{n}). \label{F-7.8}
\end{align}
\end{definition}

\begin{example}
For $\mu=\frac{1}{n}\sum_{j=1}^{n}\delta_{A_{j}}$ and $X\in\bP$,
\begin{align*}
\Lambda(\delta_{X},\mu)&=\frac{1}{n}\sum_{j=1}^{n}\delta_{X\#A_{j}},\\
{\mathcal A}(\delta_{X},\mu)&=\frac{1}{n}\sum_{j=1}^{n}\delta_{(X+A_{j})/2},\\
{\mathcal H}(\delta_{X},\mu)&=\frac{1}{n}\sum_{j=1}^{n}\delta_{2(X^{-1}+A_{j}^{-1})^{-1}}.
\end{align*}
\end{example}

\begin{proposition}\label{P-7.3}
For every $\mu_1,\dots,\mu_n\in\Pro(\bP)$,
$$
\mathcal{H}(\mu_1,\dots,\mu_n)=\bigl[\mathcal{A}(\mu_1^{-1},\dots,\mu_n^{-1})\bigr]^{-1},
$$ where $\mu^{-1}$ is the push-forward of $\mu$ by operator
inversion $A\mapsto A^{-1}.$
\end{proposition}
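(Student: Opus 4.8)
The plan is to exhibit the $n$-variable harmonic mean $\mathcal{H}_n$ as a conjugate of the arithmetic mean $\mathcal{A}_n$ by operator inversion, and then to transport this identity to the level of measures using functoriality of push-forwards. Write $\iota\colon\bP\to\bP$ for the inversion map $\iota(A)=A^{-1}$; it is a continuous involution (indeed a $d_T$-isometry), so $\iota_*\mu$ is well defined, and by the observation recorded just before \eqref{F-7.6} it again belongs to $\Pro(\bP)$, with $\mu^{-1}=\iota_*\mu$. The first step is the pointwise identity
$$\mathcal{H}_n=\iota\circ\mathcal{A}_n\circ(\iota\times\cdots\times\iota)\qquad(n\text{ factors}),$$
which is immediate from $\iota\bigl(\mathcal{A}_n(A_1^{-1},\dots,A_n^{-1})\bigr)=\bigl[\tfrac1n\sum_{j=1}^nA_j^{-1}\bigr]^{-1}=\mathcal{H}_n(A_1,\dots,A_n)$.

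The second step is the elementary fact that push-forward commutes with products,
$$(\iota\times\cdots\times\iota)_*(\mu_1\times\cdots\times\mu_n)=\mu_1^{-1}\times\cdots\times\mu_n^{-1},$$
which I would verify by evaluating both sides on measurable rectangles $B_1\times\cdots\times B_n$, where each side equals $\prod_{j=1}^n\mu_j(\iota^{-1}(B_j))$, and then invoking uniqueness of the product measure (equivalently, Dynkin's $\pi-\lambda$ theorem on the $\pi$-system of rectangles). Combining these two facts with the functoriality $(f\circ g)_*=f_*\circ g_*$ and the definitions \eqref{F-7.6}--\eqref{F-7.8} yields
$$\mathcal{H}(\mu_1,\dots,\mu_n)=(\mathcal{H}_n)_*(\mu_1\times\cdots\times\mu_n)=\iota_*\,(\mathcal{A}_n)_*\,(\iota\times\cdots\times\iota)_*(\mu_1\times\cdots\times\mu_n)=\iota_*\,\mathcal{A}(\mu_1^{-1},\dots,\mu_n^{-1}),$$
and $\iota_*\mathcal{A}(\mu_1^{-1},\dots,\mu_n^{-1})=\bigl[\mathcal{A}(\mu_1^{-1},\dots,\mu_n^{-1})\bigr]^{-1}$ by the definition of $\nu^{-1}$, which is the desired equality.

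There is no real obstacle here: the statement is a formal consequence of the two identities above. The only point meriting a line of justification is the product/push-forward lemma, which is routine once phrased via rectangles; I would either record it as a one-line remark or quote it as standard. Continuity of $\iota$ is what keeps every push-forward appearing in the chain inside $\Pro(\bP)$, but this is already subsumed in the discussion preceding the definition of the means $\Lambda$, $\mathcal{A}$, $\mathcal{H}$.
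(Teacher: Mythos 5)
Your proof is correct and follows essentially the same route as the paper: both rest on the conjugation identity $\mathcal{H}_n=\iota\circ\mathcal{A}_n\circ(\iota\times\cdots\times\iota)$ together with the compatibility of push-forward with product measures. The only cosmetic difference is that the paper verifies the resulting equality of measures by integrating bounded continuous test functions, whereas you verify it on measurable rectangles via uniqueness of the product measure; the mathematical content is the same.
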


\begin{proof}
For every bounded continuous function $f:\bP\to\R$ we have
\begin{align*}
&\int_\bP f(A)\,d\bigl[\mathcal{H}(\mu_1,\dots,\mu_n)\bigr]^{-1}(A)\\
&\qquad=\int_\bP f(A^{-1})\,d\mathcal{H}(\mu_1,\dots,\mu_n)(A) \\
&\qquad=\int_{\bP^n}f\biggl({1\over n}\sum_{j=1}^nA_j^{-1}\biggr)
\,d(\mu_1\times\dots\times\mu_n)(A_1,\dots,A_n) \\
&\qquad=\int_{\bP^n}f\biggl({1\over n}\sum_{j=1}^nA_j\biggr)
\,d(\mu_1^{-1}\times\dots\times\mu_n^{-1})(A_1,\dots,A_n) \\
&\qquad=\int_\bP f(A)\,d\mathcal{A}(\mu_1^{-1},\dots,\mu_n^{-1})(A),
\end{align*}
which shows that $\bigl[\mathcal{H}(\mu_1,\dots,\mu_n)\bigr]^{-1}
=\mathcal{A}(\mu_1^{-1},\dots,\mu_n^{-1})$.
\end{proof}

For a complete metric space $(X,d)$, in addition to $\Pro^p(X)$ with the $p$-Wasserstein
metric $d_p^W$ in \eqref{F-5.1} for $1\le p<\infty$, we also consider the set $\Pro^\infty(X)$
of $\mu\in\Pro(X)$ whose support is a bounded set of $X$, equipped with the
$\infty$-Wasserstein metric
\begin{equation}\label{E:winf}
d_\infty^W(\mu,\nu)= \inf_{\pi\in\Pi(\mu,\nu)}
\sup\{d(x,y):(x,y)\in\mathrm{supp}(\pi)\},
\end{equation}
where $\Pi(\mu,\nu)$ is the set of all couplings for $\mu,\nu$.

\begin{proposition}
For every $p\in[1,\infty]$ and $M=\Lambda,\mathcal{A},\mathcal{H}$ in
\eqref{F-7.6}--\eqref{F-7.8}, if $\mu_1,\dots,\mu_n\in\Pro^p(\bP)$ then
$M(\mu_1,\dots,\mu_n)\in\Pro^p(\bP)$. Moreover,
$$
(\mu_1,\dots,\mu_n)\in(\Pro^p(\bP))^n\mapsto
M(\mu_1,\dots,\mu_n)\in\Pro^p(\bP)
$$
is Lipschitz continuous with respect to the Wasserstein metric $d_p^W$.
\end{proposition}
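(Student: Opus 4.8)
The plan is to express each of the three maps $M\in\{\Lambda,\mathcal{A},\mathcal{H}\}$ as a composition
$$
M(\mu_1,\dots,\mu_n)=N_*(\mu_1\times\cdots\times\mu_n),
$$
where $N\in\{\Lambda_n,\mathcal{A}_n,\mathcal{H}_n\}$ is the associated map $\bP^n\to\bP$, and then to check separately that the product map $(\mu_1,\dots,\mu_n)\mapsto\mu_1\times\cdots\times\mu_n$ and the push-forward $N_*$ are each Lipschitz for the relevant Wasserstein metrics. Recall from \eqref{F-7.4} and \eqref{E:supm} that $\Lambda_n$ is $1$-Lipschitz for the averaged metric $d_n$ on $\bP^n$, while $\mathcal{A}_n$ and $\mathcal{H}_n$ are $1$-Lipschitz for the sup-metric $d_n^\infty$; since $d_n\le d_n^\infty\le n\,d_n$, the two metrics are equivalent and induce the same Borel sets, supports, and notion of $\tau$-additivity on $\bP^n$, and $(\bP^n,d_n)$ is complete because $(\bP,d_T)$ is.

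First I would record the elementary fact that push-forward by an $L$-Lipschitz map $T\colon(X,d_X)\to(Y,d_Y)$ between complete metric spaces carries $\Pro^p(X)$ into $\Pro^p(Y)$ and is $L$-Lipschitz for $d_p^W$, for every $p\in[1,\infty]$. Indeed, if $\pi\in\Pi(\mu,\nu)$ then $(T\times T)_*\pi\in\Pi(T_*\mu,T_*\nu)$, and $d_Y(Tx,Tx')^p\le L^p\,d_X(x,x')^p$; integrating this inequality against $\pi$ (when $p<\infty$), or taking the supremum over $\mathrm{supp}\bigl((T\times T)_*\pi\bigr)=\overline{(T\times T)(\mathrm{supp}\,\pi)}$ (when $p=\infty$), and then taking the infimum over $\pi$, yields $d_p^W(T_*\mu,T_*\nu)\le L\,d_p^W(\mu,\nu)$. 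The case $\nu=\mu$ with $\pi$ a Dirac mass shows that $T_*\mu$ has finite $p$-moment (resp.\ bounded support when $p=\infty$), while $\tau$-additivity of $T_*\mu$ was already recorded in the discussion following Proposition \ref{P:La}.

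Next I would show the product map is Lipschitz. Given $\mu_i,\nu_i\in\Pro^p(\bP)$ and couplings $\pi_i\in\Pi(\mu_i,\nu_i)$, the product $\pi:=\pi_1\times\cdots\times\pi_n$, reindexed as a measure on $\bP^n\times\bP^n$, lies in $\Pi(\mu_1\times\cdots\times\mu_n,\nu_1\times\cdots\times\nu_n)$; all of these product measures are $\tau$-additive since their supports are the products of the supports (as observed just before \eqref{F-7.6}). For $p<\infty$, Jensen's inequality $\bigl(\tfrac1n\sum_i t_i\bigr)^p\le\tfrac1n\sum_i t_i^p$ gives
$$
\int_{\bP^n\times\bP^n} d_n\bigl((A_i),(B_i)\bigr)^p\,d\pi\le\frac1n\sum_{i=1}^n\int_{\bP\times\bP}d_T(A_i,B_i)^p\,d\pi_i,
$$
while $\max_i t_i^p\le\sum_i t_i^p$ gives
$$
\int_{\bP^n\times\bP^n} d_n^\infty\bigl((A_i),(B_i)\bigr)^p\,d\pi\le\sum_{i=1}^n\int_{\bP\times\bP}d_T(A_i,B_i)^p\,d\pi_i,
$$
and for $p=\infty$ the analogous estimates hold with integrals against $\pi_i$ replaced by suprema over $\mathrm{supp}\,\pi_i$. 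Taking infima over the $\pi_i$ shows that $\mu_1\times\cdots\times\mu_n\in\Pro^p(\bP^n)$ and that $(\mu_i)_i\mapsto\mu_1\times\cdots\times\mu_n$ is Lipschitz from $(\Pro^p(\bP))^n$, equipped with (say) the product metric $\max_{1\le i\le n}d_p^W(\mu_i,\nu_i)$, into $\bigl(\Pro^p(\bP^n),d_n\bigr)$ with constant $1$ and into $\bigl(\Pro^p(\bP^n),d_n^\infty\bigr)$ with constant $n^{1/p}$; any equivalent product metric on the domain would serve equally well. Composing with $N_*$ from the previous step yields the proposition, the containment $M(\mu_1,\dots,\mu_n)\in\Pro^p(\bP)$ being automatic. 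Alternatively, for $M=\mathcal{H}$ one can combine Proposition \ref{P-7.3} with the fact that operator inversion $A\mapsto A^{-1}$ is a $d_T$-isometry of $\bP$, thereby reducing to the case $M=\mathcal{A}$.

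The argument is essentially routine; the only points calling for a little care are the bookkeeping with product couplings and reindexing, the convexity/power-mean inequalities used to transfer the finite-$p$-moment condition from the factors $\mu_i$ to the product measure $\mu_1\times\cdots\times\mu_n$, and running the $p=\infty$ case in parallel by replacing integrals over couplings with suprema over their supports. I do not anticipate any genuine obstacle.
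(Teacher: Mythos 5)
Your proof is correct and follows essentially the same route as the paper: decompose $M$ as a push-forward of the product measure, use the $1$-Lipschitz property of $\Lambda_n$ for $d_n$ and of $\mathcal{A}_n,\mathcal{H}_n$ for $d_n^\infty$ together with product couplings, and control the resulting integrals via $\bigl(\tfrac1n\sum_j t_j\bigr)^p\le\tfrac1n\sum_j t_j^p$ and $\max_j t_j^p\le\sum_j t_j^p$ (the paper cites \cite[Lemma 1.3]{LL17} for the push-forward step where you prove it directly, and it likewise offers Proposition \ref{P-7.3} as the alternative route for $\mathcal{H}$). The only blemish is the phrase ``the case $\nu=\mu$ with $\pi$ a Dirac mass'' for finiteness of the $p$-th moment of $T_*\mu$; you presumably mean comparing against $\delta_{x_0}$ via the coupling $\mu\times\delta_{x_0}$, which gives the intended estimate.
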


\begin{proof}
Since $\Lambda_n:\bP^n\to\bP$ is a Lipschitz map with Lipschitz constant $1$ with respect to
$d_n$ in \eqref{F-7.4}, we can use \cite[Lemma 1.3]{LL17} to see that for each $p\in[1,\infty]$
the push-forward map $(\Lambda_n)_*:\Pro^p(\bP^n)\to\Pro^p(\bP)$ is Lipschitz with Lipschitz
constant $1$ with respect to the metric $d_p^W$, where $d_p^W$ on $\Pro^p(\bP^n)$ is defined
in terms of $d_n$. Let $\mu_1,\dots,\mu_n;\nu_1,\dots,\nu_n\in\Pro^p(\bP)$. Then it is clear
that $\mu_1\times\dots\times\mu_n\in\Pro^p(\bP^n)$ and hence $\Lambda(\mu_1,\dots,\mu_n)=
(\Lambda_n)_*(\mu_1\times\dots\times\mu_n)$ is in $\Pro^p(\bP)$. To show the Lipschitz
continuity, we may prove more precisely that
\begin{align*}
d_p^W(\Lambda(\mu_1,\dots,\mu_n),\Lambda(\nu_1,\dots,\nu_n))
&\le\Biggl[{1\over n}\sum_{j=1}^n\Bigl(d_p^W(\mu_j,\nu_j)\Bigr)^p\Biggr]^{1/p}
\quad\mbox{when $1\le p<\infty$}, \\
d_\infty^W(\Lambda(\mu_1,\dots,\mu_n),\Lambda(\nu_1,\dots,\nu_n))
&\le\max_{1\le j\le n}d_\infty^W(\mu_j,\nu_j)
\hskip2.1cm\mbox{when $p=\infty$}.
\end{align*}
To prove this, let $\pi_j\in\Pi(\mu_j,\nu_j)$, $1\le j\le n$. Since
$\pi_1\times\dots\times\pi_n\in\Pi(\mu_1\times\dots\times\mu_n,\nu_1\times\dots\times\nu_n)$,
we have, for the case $1\le p<\infty$,
\begin{align*}
&d_p^W((\Lambda_n)_*(\mu_1\times\dots\times\mu_n),
(\Lambda_n)_*(\nu_1\times\dots\times\nu_n)) \\
&\quad\le d_p^W(\mu_1\times\dots\times\mu_n,\nu_1\times\dots\times\nu_n) \\
&\quad\le\biggl[\int_{\bP^n\times\bP^n}d_n^p((A_1,\dots,A_n),(B_1,\dots,B_n))
\,d(\pi_1\times\dots\times\pi_n)\biggr]^{1/p} \\
&\quad=\Biggl[\int_{\bP^n\times\bP_n}\Biggl({1\over n}\sum_{j=1}^nd_T(A_j,B_j)\Biggr)^p
\,d(\pi_1\times\dots\times\pi_n)\Biggr]^{1/p} \\
&\quad\le\Biggl[\int_{\bP^n\times\bP_n}{1\over n}\sum_{j=1}^nd_T^p(A_j,B_j)
\,d\pi_1\times\dots\times\pi_n\Biggr]^{1/p} \\
&\quad=\Biggl[{1\over n}\sum_{j=1}^n\int_{\bP\times\bP}d_T^p(A_j,B_j)
\,d\pi_j(A_j,B_j)\Biggr]^{1/p}.
\end{align*}
By taking the infima over $\pi_j$, $1\le j\le n$, in the last
expression, we have the desired $d_p^W$-inequality when $1\le
p<\infty$. The proof when $p=\infty$ is similar, so we omit the
details.

Since $\mathcal{A}_n,\mathcal{H}_n:\bP^n\to\bP$ is Lipschitz with
Lipschitz constant $1$ with respect to $d_n^\infty$ in
(\ref{E:supm}), we can use \cite[Lemma 1.3]{LL17} again with the
metric $d_p^W$ in terms of $d_n^\infty$ (in place of $d_n$ in the
above). For the Lipschitz continuity of
$\mathcal{A}(\mu_1,\dots,\mu_n)$ we have, for $1\le p<\infty$,
\begin{align*}
&d_p^W((\mathcal{A}_n)_*(\mu_1\times\dots\times\mu_n),
(\mathcal{A}_n)_*(\nu_1\times\dots\times\nu_n)) \\
&\quad\le d_p^W(\mu_1\times\dots\times\mu_n,\nu_1\times\dots\times\nu_n) \\
&\quad\le\biggl[\int_{\bP^n\times\bP^n}\max_{1\le j\le n}d_T^p(A_j,B_j)
\,d(\pi_1\times\dots\times\pi_n)\biggr]^{1/p} \\
&\quad\le\Biggl[\sum_{j=1}^n\int_{\bP\times\bP}d_T^p(A_j,B_j)\,d\pi_j(A_j,B_j)\biggr]^{1/p},
\end{align*}
which implies that
$$
d_p^W(\mathcal{A}(\mu_1,\dots,\mu_n),\mathcal{A}(\nu_1,\dots,\nu_n))
\le\Biggl[\sum_{j=1}^n\Bigl(d_p^W(\mu_j,\nu_j)\Bigr)^p\Biggr]^{1/p}.
$$
For $p=\infty$, we similarly have
$$
d_\infty^W(\mathcal{A}(\mu_1,\dots,\mu_n),\mathcal{A}(\nu_1,\dots,\nu_n))
\le\max_{1\le j\le n}d_\infty^W(\mu_j,\nu_j).
$$
The proof for $\mathcal{H}(\mu_1,\dots,\mu_n)$ is analogous, or we may use
Proposition \ref{P-7.3}.
\end{proof}

The next theorem is the AGH mean inequalities in the stochastic order for probability measures.

\begin{theorem}\label{T-7.5}
For any $\mu_{1},\dots,\mu_{n}\in\Pro(\bP)$,
$${\mathcal H}(\mu_{1},\dots,\mu_{n})\leq
\Lambda(\mu_{1},\dots,\mu_{n})\leq {\mathcal A}(\mu_{1},\dots,\mu_{n}).$$
\end{theorem}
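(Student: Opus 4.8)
The plan is to reduce the measure-theoretic statement to the known pointwise AGH inequalities $\mathcal{H}_n(A_1,\dots,A_n)\le\Lambda_n(A_1,\dots,A_n)\le\mathcal{A}_n(A_1,\dots,A_n)$ on $\bP^n$ (the Loewner-order AGH inequalities for the Karcher mean) by pushing them forward through the product measure. Concretely, fix $\mu_1,\dots,\mu_n\in\Pro(\bP)$ and set $\lambda:=\mu_1\times\dots\times\mu_n\in\Pro(\bP^n)$. We want $\mathcal{H}(\mu_1,\dots,\mu_n)=(\mathcal{H}_n)_*\lambda\le(\Lambda_n)_*\lambda=\Lambda(\mu_1,\dots,\mu_n)$ and similarly for the other inequality. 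By Proposition \ref{P:TM3} (or directly Proposition \ref{P:SO3}) it suffices to check that for every bounded continuous monotone $f:\bP\to\R^+$ one has $\int_\bP f\,d(\mathcal{H}_n)_*\lambda\le\int_\bP f\,d(\Lambda_n)_*\lambda$, i.e.\ $\int_{\bP^n}f(\mathcal{H}_n(\mathbf A))\,d\lambda(\mathbf A)\le\int_{\bP^n}f(\Lambda_n(\mathbf A))\,d\lambda(\mathbf A)$. Since $f$ is monotone for the Loewner order and $\mathcal{H}_n(\mathbf A)\le\Lambda_n(\mathbf A)$ pointwise, the integrand inequality $f(\mathcal{H}_n(\mathbf A))\le f(\Lambda_n(\mathbf A))$ holds at every $\mathbf A\in\bP^n$, and integrating against $\lambda$ gives the claim. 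The same argument with $\Lambda_n\le\mathcal{A}_n$ gives the upper inequality.

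First I would recall/cite the operator AGH inequalities for the Karcher mean: $\mathcal{H}_n\le\Lambda_n\le\mathcal{A}_n$ in the Loewner order, which is standard (it follows, e.g., from the Ando--Li--Mathias / Bhatia--Holbrook development of the multivariable geometric mean, or from the monotonicity and concavity properties established via the Karcher equation — one can cite \cite{LL13,LL14,LL17}). Second, I would note the measurability/integrability bookkeeping: $\Lambda_n,\mathcal{A}_n,\mathcal{H}_n:\bP^n\to\bP$ are continuous (indeed Lipschitz, as recorded just above the theorem), hence Borel, so $f\circ\Lambda_n$ etc.\ are bounded Borel functions on $\bP^n$ and the integrals against $\lambda\in\Pro(\bP^n)$ are finite and well-defined; and by the change-of-variables formula for push-forwards, $\int_\bP f\,d(\Lambda_n)_*\lambda=\int_{\bP^n}(f\circ\Lambda_n)\,d\lambda$. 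Third, I would invoke Proposition \ref{P:TM3}, which applies because $\bP$ with the Thompson metric satisfies the hypothesis of Proposition \ref{P:SO5}, to conclude $\mu\le\nu$ from the inequality of integrals of all bounded continuous monotone $f$.

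There is essentially no obstacle here: the whole content is the pointwise operator AGH inequality, which is imported, and the rest is the observation that monotone functions and push-forwards interact with the stochastic order exactly as one wants — this is precisely what Proposition \ref{P:TM3} was set up to deliver. The one point requiring a sentence of care is making sure we are entitled to test against \emph{continuous} monotone functions rather than all monotone Borel functions; this is exactly the force of Proposition \ref{P:TM3} (via Proposition \ref{P:SO5}), so invoking it is the right move. If one preferred to avoid even that, one could instead test against $\chi_U$ for $U$ an open upper set: $\{A:\Lambda_n(A)\in U\}\subseteq\{A:\mathcal{A}_n(A)\in U\}$ since $\Lambda_n\le\mathcal{A}_n$ and $U$ is an upper set, whence $(\Lambda_n)_*\lambda(U)=\lambda(\Lambda_n^{-1}(U))\le\lambda(\mathcal{A}_n^{-1}(U))=(\mathcal{A}_n)_*\lambda(U)$, giving $\Lambda(\mu_1,\dots,\mu_n)\le\mathcal{A}(\mu_1,\dots,\mu_n)$ directly from the definition of the stochastic order, and likewise for the harmonic mean. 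Either route is short; I would present the second, most elementary one.
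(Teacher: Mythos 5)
Your argument is correct and is essentially the paper's own proof: both reduce the claim to the pointwise operator AGH inequalities $\mathcal{H}_n\le\Lambda_n\le\mathcal{A}_n$, integrate a bounded continuous monotone $f$ composed with these maps against the product measure, and conclude via Proposition \ref{P:TM3}. Your alternative of testing directly with $\chi_U$ for an open upper set $U$ is a valid (and slightly more elementary) cosmetic variant of the same reduction, not a different method.
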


\begin{proof} Let
$f:X\to\R^+$ be continuous and monotone. Then by the AGH mean inequalities for operators,
\begin{align*}
\int_\bP f\,d\Lambda(\mu_{1},\dots,\mu_{n})
&=\int_{\bP^n}(f\circ\Lambda_n)(A_{1},\dots,A_{n})
\,d(\mu_{1}\times\cdots\times\mu_{n})(A_{1},\dots,A_{n})\\
&\leq\int_{\bP^n} (f\circ {\mathcal A}_n)(A_{1},\dots,A_{n})
\,d(\mu_{1}\times\cdots\times\mu_{n})(A_{1},\dots,A_{n})\\
&=\int_\bP f\,d{\mathcal A}(\mu_{1},\dots,\mu_{n}),
\end{align*}
which implies by Proposition \ref{P:TM3} that
$\Lambda(\mu_{1},\dots,\mu_{n})\leq {\mathcal A}(\mu_{1},\dots,\mu_{n})$. The proof of
${\mathcal H}(\mu_{1},\dots,\mu_{n})\leq\Lambda(\mu_{1},\dots,\mu_{n})$ is similar.
\end{proof}

\begin{theorem}\label{T-7.6}
The maps $\Lambda,\mathcal{A},\mathcal{H}:(\Pro(\bP))^n\to\Pro(\bP)$ are monotonically
increasing in the sense that if $\mu_j,\nu_j\in\Pro(\bP)$ and $\mu_j\le\nu_j$ for
$1\le j\le n$, then $M(\mu_1,\dots,\mu_n)\le M(\nu_1,\dots,\nu_n)$ for
$M=\Lambda,\mathcal{A},\mathcal{H}$.
\end{theorem}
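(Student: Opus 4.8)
The plan is to reduce the assertion to the functional characterization of the stochastic order on $\bP$ together with the Loewner‑monotonicity of the three underlying operator means. Write $M_n$ for any one of $\Lambda_n,\mathcal A_n,\mathcal H_n:\bP^n\to\bP$ and $M$ for the corresponding map on $(\Pro(\bP))^n$, so that $M(\mu_1,\dots,\mu_n)=(M_n)_*(\mu_1\times\cdots\times\mu_n)$ and likewise for the $\nu_j$. By Proposition \ref{P:TM3} it suffices to show that for every continuous bounded monotone $f:\bP\to\R^+$,
\[
\int_\bP f\,dM(\mu_1,\dots,\mu_n)\ \le\ \int_\bP f\,dM(\nu_1,\dots,\nu_n),
\]
and by the change‑of‑variables formula for push‑forwards this is exactly
\[
\int_{\bP^n}f\circ M_n\,d(\mu_1\times\cdots\times\mu_n)\ \le\ \int_{\bP^n}f\circ M_n\,d(\nu_1\times\cdots\times\nu_n).
\]
Now $M_n$ is continuous (as recalled in Section 7) and is monotone for the Loewner, i.e. coordinatewise, order on $\bP^n$: this is elementary for $\mathcal A_n$ and, via the order‑reversal of operator inversion, for $\mathcal H_n$, while for the Karcher mean $\Lambda_n$ it is the classical monotonicity of the least squares mean (see, e.g., \cite{LL13,LL14,LL17}). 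Hence $g:=f\circ M_n$ is a bounded monotone Borel function on $\bP^n$, and the theorem is reduced to the statement that the product of stochastically ordered measures is again stochastically ordered.

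Precisely, the key lemma to establish is: if $\mu_j\le\nu_j$ in $\Pro(\bP)$ for $1\le j\le n$, then $\mu_1\times\cdots\times\mu_n\le\nu_1\times\cdots\times\nu_n$ in $\Pro(\bP^n)$ for the product metric and product order; by Proposition \ref{P:SO3} this is the same as $\int g\,d(\mu_1\times\cdots\times\mu_n)\le\int g\,d(\nu_1\times\cdots\times\nu_n)$ for every bounded monotone Borel $g:\bP^n\to\R^+$. I would prove this by induction on $n$, changing one coordinate at a time. For the inductive step, Fubini's theorem gives
\[
\int_{\bP^n}g\,d(\mu_1\times\cdots\times\mu_n)=\int_{\bP^{n-1}}\Bigl(\int_\bP g(x,\mathbf x')\,d\mu_1(x)\Bigr)\,d(\mu_2\times\cdots\times\mu_n)(\mathbf x'),
\]
where, just as in the definition of $\Lambda(\mu_1,\dots,\mu_n)$ above, one uses that all measures involved have separable support (Proposition \ref{P:La}) so that $g$ is product‑measurable and Fubini applies. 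For each fixed $\mathbf x'$ the section $x\mapsto g(x,\mathbf x')$ is bounded monotone Borel on $\bP$, so $\mu_1\le\nu_1$ and Proposition \ref{P:SO3} yield $\int_\bP g(x,\mathbf x')\,d\mu_1(x)\le\int_\bP g(x,\mathbf x')\,d\nu_1(x)=:H(\mathbf x')$; moreover $H$ is again bounded monotone Borel on $\bP^{n-1}$ since $g$ is monotone in $\mathbf x'$ and $\nu_1$‑integration preserves this. Thus the displayed integral is $\le\int_{\bP^{n-1}}H\,d(\mu_2\times\cdots\times\mu_n)$, and applying the inductive hypothesis to $H$ and $\mu_2\le\nu_2,\dots,\mu_n\le\nu_n$ bounds this by $\int_{\bP^{n-1}}H\,d(\nu_2\times\cdots\times\nu_n)$, which equals $\int_{\bP^n}g\,d(\nu_1\times\cdots\times\nu_n)$ by Fubini again. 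The base case $n=1$ is Proposition \ref{P:SO3} itself.

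Combining the lemma (applied to $g=f\circ M_n$) with the two displayed identities at the start gives $\int_\bP f\,dM(\mu_1,\dots,\mu_n)\le\int_\bP f\,dM(\nu_1,\dots,\nu_n)$ for every continuous bounded monotone $f:\bP\to\R^+$, whence $M(\mu_1,\dots,\mu_n)\le M(\nu_1,\dots,\nu_n)$ by Proposition \ref{P:TM3}. The only substantive input beyond the general theory of Sections 3--5 is the Loewner‑monotonicity of $\Lambda_n$ (the monotonicity of $\mathcal A_n,\mathcal H_n$ being trivial); the rest is a bookkeeping argument with Fubini's theorem and Proposition \ref{P:SO3}. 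The one point that requires a little care is the product‑measurability needed to invoke Fubini when $H$ is infinite‑dimensional, but this is resolved exactly as in the definition of the product means, by restricting to the separable supports of the measures.
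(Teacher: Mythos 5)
Your proof is correct and follows essentially the same route as the paper's: both reduce the claim to integrating a monotone function against the product measures, compare one coordinate at a time via Fubini and Proposition \ref{P:SO3}, and invoke the Loewner-monotonicity of $\Lambda_n,\mathcal{A}_n,\mathcal{H}_n$. The only difference is organizational: you isolate a general lemma that $\mu_1\times\cdots\times\mu_n\le\nu_1\times\cdots\times\nu_n$ for the coordinatewise order on $\bP^n$ and then push forward by the monotone map $M_n$, whereas the paper replaces $\mu_j$ by $\nu_j$ one at a time directly inside $M(\cdot)$, forming the chain $M(\mu_1,\dots,\mu_n)\le M(\nu_1,\mu_2,\dots,\mu_n)\le\cdots\le M(\nu_1,\dots,\nu_n)$.
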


\begin{proof}
Let $f:\bP\to\R^+$ be a monotone bounded Borel function. We write
$$
\int_\bP f\,d\Lambda(\mu_1,\dots,\mu_n)=\int_\bP g(A_1)\,d\mu_1(A_1),
$$
where
$$
g(A_1):=\int_{\bP^{n-1}}(f\circ\Lambda_n)(A_1,A_2,\dots,A_n)
\,d(\mu_2\times\dots\times\mu_n)(A_2,\dots,A_n).
$$
From the monotonicity property of $\Lambda_n$, it is immediate to see that $A_1\mapsto g(A_1)$
is a monotone bounded Borel function on $\bP$. Hence by Proposition \ref{P:SO3} we have
$$
\int_\bP f\,d\Lambda(\mu_1,\dots,\mu_n)
\le\int_\bP g(A_1)\,d\nu_1(A_1)=\int_\bP f\,d\Lambda(\nu_1,\mu_2,\dots,\mu_n).
$$
This implies that $\Lambda(\mu_1,\mu_2,\dots,\mu_n)\le\Lambda(\nu_1,\mu_2,\dots,\mu_n)$.
Repeating the argument shows that $\Lambda(\nu_1,\mu_2,\dots,\mu_n)\le
\Lambda(\nu_1,\nu_2,\mu_3,\dots,\mu_n)$ and so on. Hence $\Lambda(\mu_1,\dots,\mu_n)\le
\Lambda(\nu_1,\dots,\nu_n)$ follows. The proof is similar for $\mathcal{A}$ and $\mathcal{H}$.
\end{proof}

\begin{remark}
One can apply the arguments in this section to other multivariate
operator means of $(A_1,\dots,A_n)\in\bP^n$ having the monotonicity
property. For instance, let $P_t(A_1,\dots,A_n)$ for $t\in[-1,1]$ be
the one-parameter family of multivariate power means interpolating
$\mathcal{H}_n$, $\Lambda_n$, $\mathcal{A}_n$ as
$P_{-1}=\mathcal{H}_n$, $P_0=\Lambda_n$ and $P_1=\mathcal{A}_n$. The
power mean $P_{t}(A_{1},\dots,A_{n})$ for $t\in (0,1]$ is defined by
the unique positive definite solution of
$X=\frac{1}{n}\sum_{j=1}^{n} X\#_{t}A_{j},$ where
$A\#_{t}B=A^{1/2}(A^{-1/2}BA^{-1/2})^{t}A^{1/2}$ denotes the
$t$-weighted geometric mean of $A$ and $B.$  It is monotonic and
Lipschitz
$$d_T(P_{t}(A_{1},\dots,A_{n}), P_{t}(B_1,\dots,B_n)\leq \max_{1\leq
j\leq n}d_T(A_{j},B_{j}).$$ Moreover, $P_t(A_1,\dots,A_n)$ is
monotonically increasing in $t\in[-1,1]$ and
\begin{align}\label{F-7.10}
\lim_{t\to0}P_t(A_1,\dots,A_n)=\Lambda_n(A_1,\dots,A_n).
\end{align}
For power means, see \cite{LP} for positive definite matrices
and \cite{LL13,LL14} for positive operators on an infinite-dimensional Hilbert space.
Then one has the one-parameter family of $P_t(\mu_1,\dots,\mu_n)$ for
$\mu_1,\dots,\mu_n\in\Pro(\bP)$ so that each
$P_t(\mu_1,\dots,\mu_n)$ is monotonically increasing in
$\mu_1,\dots,\mu_n$ as in Theorem \ref{T-7.6} and
$P_t(\mu_1,\dots,\mu_n)$ is monotonically increasing in $t$,
extending the AGH mean inequalities in Theorem \ref{T-7.5}. Moreover,
$$
P_s(\mu_1,\dots,\mu_n)\leq\Lambda(\mu_{1},\dots,\mu_{n})\leq P_t(\mu_1,\dots,\mu_n)
$$
for $-1\le s<0<t\leq 1$ as in Theorem \ref{T-7.5}.

Now assume that $\bP$ is the cone of positive definite matrices of some fixed dimension, and
let $\mu_j\in\Pro^1(\bP)$, $1\le j\le n$. For any continuous bounded and monotone
$f:\bP\to\R^+$ we see by \eqref{F-7.10} that
$$
\int_\bP f\,dP_t(\mu_1,\dots,\mu_n)
=\int_{\bP^n}(f\circ P_t)(A_1,\dots,A_n)\,d(\mu_1\times\dots\times\mu_n)(A_1,\dots,A_n)
$$
increases as $t\nearrow0$ and decreases as $t\searrow0$ to
$$
\int_{\bP^n}(f\circ\Lambda_n)(A_1,\dots,A_n)\,d(\mu_1\times\dots\times\mu_n)(A_1,\dots,A_n)
=\int_\bP f\,d\Lambda(\mu_1,\dots,\mu_n).
$$
Hence by Corollary \ref{C-6.5},
$$
\lim_{t\to0}d_1^W(P_t(\mu_1,\dots,\mu_n),\Lambda(\mu_1,\dots,\mu_n))=0.
$$
It would be interesting to know whether this convergence holds true in the infinite-dimensional
case as well.
\end{remark}

\begin{remark}
Several issues arise related to $\Lambda(\mu_1,\dots,\mu_n)$. For
example, it is interesting to consider existence and uniqueness for
the least squares mean on $\Pro^1(\bP)$;
$$\underset{\mu \in \Pro^1(\bP)}{\argmin}\sum_{j=1}^{n}d_1^W(\mu, \mu_{j})^2$$
and a connection with the probability measure
$\Lambda(\mu_{1},\dots,\mu_{n}).$  Moreover, the probability Borel
measure equation
$$x={\mathcal A}(x\#_{t}\mu_{1},\dots,x\#_{t}\mu_{n}), \ \ \ \mu_{j}\in {\mathcal P}_{cp}({\Bbb P}),\ t\in (0,1],$$
where $\mu\#_{t}\nu=f_{*}(\mu\times\nu)$ is the
push-forward by the $t$-weighted geometric mean map
$f(A,B)=A\#_{t}B,$ seems to have a unique solution in ${\mathcal
P}_{cp}({\Bbb P}),$ the set of probability measures with compact
support.
\end{remark}

\section{Acknowledgements}

The work of F.~Hiai was supported in part by
Grant-in-Aid for Scientific Research (C)17K05266.
The work of Y. Lim was supported by the
National Research Foundation of Korea (NRF) grant funded by the
Korea government (MEST) No.NRF-2015R1A3A2031159.

\end{document}